\newtheorem{thm}{Theorem}[section]
\newtheorem{cor}[thm]{Corollary}
\newtheorem{lem}[thm]{Lemma}
\newtheorem{prop}[thm]{Proposition}
\journal{}
\begin{document}

\begin{frontmatter}

\title{Bounds on the $\alpha$-distance spectrum of graphs}
\author{Yang Yang\fnref{label1}}
\author{Lizhu Sun\fnref{label2}}\ead{sunlizhu678876@126.com}
\author{Changjiang Bu$^{a,b}$} \ead{buchangjiang@hrbeu.edu.cn}

\address[label1]{College of Automation,, Harbin Engineering University, Harbin 150001, PR China}
\address[label2]{College of  Mathematical Sciences, Harbin Engineering University, Harbin 150001, PR China}

\begin{abstract}
For a simple, undirected and connected graph $G$, $D_{\alpha}(G) = \alpha Tr(G) + (1-\alpha) D(G)$ is called the $\alpha$-distance matrix of $G$, where $\alpha\in [0,1]$, $D(G)$ is the distance matrix of $G$, and $Tr(G)$ is the vertex transmission diagonal matrix of $G$. Recently, the $\alpha$-distance energy of $G$ was defined based on the spectra of $D_{\alpha}(G)$. In this paper, we define the $\alpha$-distance Estrada index of $G$ in terms of the eigenvalues of $D_{\alpha}(G)$.  And we give some bounds on the spectral radius of $D_{\alpha}(G)$, $\alpha$-distance energy and $\alpha$-distance Estrada index of $G$.
\end{abstract}

\begin{keyword}
Distance matrix, Energy of graph, Estrada index, $\alpha$-distance spectra
\MSC[2010]
05C50
\end{keyword}

\end{frontmatter}

\section{Introduction}

\subsection{Distance spectrum of graphs}

In this paper, we consider the simple, undirected and connected graphs. Let $G=(V(G)$, $E(G))$ be a graph with the vertex set $V(G) = \{v_1,\ldots,v_n\}$ and edge set
$E(G)$.
The distance between two vertices $v_i,v_j\in V(G)$ is the length of the shortest path between $v_i$ and $v_j$, denoted by $d(v_i,v_j)$.
The \textit{Wiener index} $W(G)$ of the graph $G$ is the sum of the distances between all pairs of vertices in $G$, i.e.,
$W(G)=\frac{1}{2}\sum_{v_i,v_j\in V(G), i\neq j}d(v_i,v_j).$
The matrix $D(G)=(d_{i,j})\in \mathbb{R}^{n\times n}$ with entries $d_{i,j}=d(v_i,v_j)$ is called the \textit{distance matrix} of  $G$, where $i,j\in \{1,2,\cdots,n\}$. The spectra of $D(G)$ is called the \textit{distance spectra} of $G$.

The study of the distance spectra arises from the work of R. Graham and H.O. Pollack in 1971 (see [11]). In [11], the determinant of the distance matrix of a tree $T$ was given as  $\det(D(T))=(-1)^{n-1}(n-1)2^{n-2}$, where $n=|V(T)|$. And Graham and Pollack proved $D(T)$ has $1$ positive eigenvalue and $n-1$ negative eigenvalues. In 1978, Graham and Lov\'{a}sz established the representation for the inverse of $D(T)$ (see [10]). The distance spectrum of graphs has attracted much attention [27].

\subsection{$\alpha$-spectrum of graphs}

The \textit{adjacency matrix} of the graph $G$ is $\widetilde{A}(G)=(\widetilde{a}_{ij})\in \mathbb{R}^{n\times n}$, where $\widetilde{a}_{ij}=1$ if $(i,j)\in E(G)$, and $\widetilde{a}_{ij}=0$ otherwise.
The \textit{Laplacian matrix} and \textit{signless Laplacian matrix} of $G$ are
$$\widetilde{L}(G)=\widetilde{D}(G)-\widetilde{A}(G) \mbox{ and } \widetilde{Q}(G)=\widetilde{D}(G)+\widetilde{A}(G),$$
respectively, where $\widetilde{D}(G)= \mathrm{diag}(\widetilde{d}_{v_1},\cdots,\widetilde{d}_{v_n})\in \mathbb{R}^{n\times n}$ and $\widetilde{d}_{v_i}$ is the degree of $v_i$, $i=1,2,\cdots,n$.

It is well-known that the spectrum of the adjacency matrix, Laplacian matrix and signless Laplacian matrix of a graph were widely investigated [5].
In 2013, the study of the spectrum of Laplacian matrix and signless Laplacian matrix was extended to distance Laplacian matrices and distance signless Laplacian matrices defined as in Equation (1) (see [1]). In 2016,  the study of the spectrum of Laplacian matrix and signless Laplacian matrix was generalized to a convex combination of $\widetilde{D}(G)$ and $\widetilde{A}(G)$ defined as $ A_\alpha{(G)} = \alpha \widetilde{D}(G) + (1-\alpha)\widetilde{A}(G),~\alpha \in [0,1]$ (see [24]). Recently, the above study was further extended to the $\alpha$-distance matrices defined as in Equation (2) (see [3]).

For a vertex $v_i\in V(G)$, the sum of the distances between $v_i$ and all the other vertices in $V(G)$ is called the \textit{transimission}  of $v_i$, denoted by $Tr(v_i)$, that is $$Tr(v_i)=\sum_{v_j\in V(G),i\neq j} d(v_i,v_j).$$
A graph $G$ is said to be \textit{transmission regular} if the transimissions of all the vertices in  $V(G)$ are equal.
In 2013, Aouchiche and Hansen [1] defined the distance Laplacian matrix $\mathcal{L}(G)$ and distance signless Laplacian matrix $\mathcal{Q} (G)$ of the graph $G$,
\begin{align}
\mathcal{L}(G)=Tr(G)-D(G) \mbox{ and } \mathcal{Q} (G) = Tr(G) + D(G),
\end{align}
where $Tr(G)=\mathrm{diag}(Tr(v_1),\cdots,Tr(v_n))$.

For a transmission regular graph $G$, the characteristic polynomials of $\mathcal{L}(G)$ and $\mathcal{Q} (G)$ were calculated in [1]. In [21,28,29], the bounds on the spectral radii for the distance signless Laplacian matrix of trees, unicyclic graphs and bicyclic graphs were characterized, respectively.

In [3], the \textit{$\alpha$-distance matrix} of a graph $G$
\begin{align}
D_{\alpha}(G) = \alpha Tr(G) + (1-\alpha) D(G),~\alpha \in[0,1],
\end{align}
was defined. Clearly, $D_0(G)=D(G)$, $2D_{\frac{1}{2}}(G) = \mathcal{Q}(G)$ and $D_1(G) = Tr(G)$.
The spectra of $D_\alpha(G)$ is called the \textit{$\alpha$-distance spectra} of $G$.
Since $D_{\alpha}(G)$ is a real symmetric matrix, the eigenvalues of $D_{\alpha}(G)$ are real. Let $\sigma_1(G)\geq \sigma_2(G) \geq \cdots \geq\sigma_n(G)$ be the eigenvalues of $D_{\alpha}(G)$. And let $\rho_{\alpha}(G)= \max \{\sigma_i(G) | i=1, 2, \cdots, n\}$ be the \textit{spectral radius} of $D_{\alpha}(G)$. We know that $D_{\alpha}(G)$ is a nonnegative weakly irreducible matrix. From the Perron-Frobenius Theorem, we have $\sigma_1(G)=\rho_{\alpha}(G)$. And $D_{\alpha}(G)$  is positive semidefiniteness for $\alpha \in [\frac{1}{2},1]$.

In [3],  upper and lower bounds for the spectral radius of the $\alpha$-distance matrix were established.
In [4], authors characterized the unique graph with minimum spectral radius of the $\alpha$-distance matrix among the connected graphs with fixed chromatic number.
In [20], a lower bound on the $k$-th smallest eigenvalue of the $\alpha$-distance matrix was given.
In [14],  the bounds on the spectral radius of the $\alpha$-distance matrix were established.

\subsection{$\alpha$-distance energy of graphs}

Let $\lambda_i(G)$, $\kappa_i(G)$ and $\nu_i(G)$ be the eigenvalues of $\widetilde{A}(G)$, $D(G)$ and $\mathcal{Q} (G)$
respectively, where $i=1,2\cdots,n$ and $n=|V(G)|$. The energy of $G$ is $E_\pi(G)=\sum_{i=1}^n|\lambda_i(G)|$ (see [12,13]).
The sum $DE(G) = \sum_{i=1}^n |\kappa_i(G)|$ is called the distance energy of $G$ (see [17]),
and $DSLE(G) = \sum_{i=1}^n|\nu_i(G) - \frac{2W(G)}{n}|$ is called the distance signless Laplacian energy [6].

Graph energy has important applications in the fields of mathematics and chemistry. There are many researches on the above kinds of graph energy.
Scholars gave the bounds on the energy of graphs, for example the McClelland$^\prime s$ bounds [23], Koolen-Moulton$^\prime s$ bounds [18] and so on [2]. In [17], the distance energy of some graphs were calculated.

Recently, Zhou extended the concept of graph energy to a more general form called \textit{$\alpha$-distance energy}
$$\varsigma_{\alpha}(G) =\sum_{i=1}^{n}|\sigma_i(G) -\frac{2\alpha W(G)}{n}|,~\alpha \in[0,1],$$
where $\sigma_i(G)$ is the eigenvalue of $D_{\alpha}(G)$, $i=1,2,\cdots,n$, $n=|V(G)|$ (see [14]). Clearly, $\varsigma_0(G)=DE(G)$ and $\varsigma_{1/2}(G)=\frac{1}{2}DSLE(G)$.

\subsection{Main work}
In this paper, we give some bounds on the $\alpha$-distance energy of graphs and stars in terms of the parameter $\alpha$ and the vertex number. And we establish some bounds for the spectral radius of the $\alpha$-distance matrix by using the transimission of vertices and Wiener index. Further, we define the $\alpha$-distance Estrada index of a graph, and obtain some bounds on the $\alpha$-distance Estrada index.

\section{Some bounds for the $\alpha$-distance energy of graphs}
To begin with this section, we introduce some notations.
The \textit{average transmission} of the graph $G$, denoted by $t(G)$, is defined by
$$t(G)=\frac{1}{n}\sum_{i=1}^{n}Tr(v_i),$$
where $n=|V(G)|$. Clearly,
$t(G)=\frac{2W(G)}{n}.$
Let $S= \sum_{1\leq i<j\leq n} d^2(v_i,v_j)$ (see [3]).

\begin{lem}
\textup{[3]}
Let $G$ be a  graph with $n$ vertices. Then
\begin{eqnarray*}
\rho_{\alpha}(G) \geq \frac{2W(G)}{n},
\end{eqnarray*}
the equality holds if and only if $G$ is a transmission regular graph.
\end{lem}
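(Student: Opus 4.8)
The plan is to bound the spectral radius from below by the Rayleigh quotient of $D_{\alpha}(G)$ evaluated at the all-ones vector $\mathbf{1}=(1,\ldots,1)^{T}$. Since $D_{\alpha}(G)$ is real symmetric with largest eigenvalue $\sigma_1(G)=\rho_{\alpha}(G)$, we have
$$\rho_{\alpha}(G)=\max_{x\neq 0}\frac{x^{T}D_{\alpha}(G)x}{x^{T}x}\geq \frac{\mathbf{1}^{T}D_{\alpha}(G)\mathbf{1}}{\mathbf{1}^{T}\mathbf{1}}.$$
First I would compute the two quantities on the right. Clearly $\mathbf{1}^{T}\mathbf{1}=n$. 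For the numerator, the key observation is that the $i$-th row sum of $D_{\alpha}(G)$ is $\alpha Tr(v_i)+(1-\alpha)\sum_{j}d(v_i,v_j)=\alpha Tr(v_i)+(1-\alpha)Tr(v_i)=Tr(v_i)$, so that $\mathbf{1}^{T}D_{\alpha}(G)\mathbf{1}=\sum_{i=1}^{n}Tr(v_i)=2W(G)$. Substituting gives $\rho_{\alpha}(G)\geq \frac{2W(G)}{n}$, the asserted bound.

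For the equality analysis I would use the standard fact that, for a symmetric matrix, the Rayleigh quotient at a vector $x$ equals the maximum eigenvalue if and only if $x$ is an eigenvector for that eigenvalue. Hence equality $\rho_{\alpha}(G)=\frac{2W(G)}{n}$ holds precisely when $\mathbf{1}$ is an eigenvector of $D_{\alpha}(G)$ belonging to $\rho_{\alpha}(G)$, that is, when $D_{\alpha}(G)\mathbf{1}=\rho_{\alpha}(G)\mathbf{1}$. By the row-sum computation above this reads $Tr(v_i)=\rho_{\alpha}(G)$ for every $i$, which is exactly the condition that all vertex transmissions coincide, i.e. $G$ is transmission regular. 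Conversely, if $G$ is transmission regular then every row sum equals the common transmission $t(G)=\frac{2W(G)}{n}$, so $\mathbf{1}$ is a strictly positive eigenvector of $D_{\alpha}(G)$.

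The only delicate point is the equality case, where one must ensure that the eigenvalue attached to $\mathbf{1}$ is the spectral radius rather than some smaller eigenvalue. This is guaranteed by the Perron--Frobenius theorem applied to the nonnegative weakly irreducible matrix $D_{\alpha}(G)$ (as recorded in the introduction): since $\mathbf{1}$ is strictly positive, it must be the Perron eigenvector and hence belongs to $\rho_{\alpha}(G)$, giving $\rho_{\alpha}(G)=\frac{2W(G)}{n}$. Everything else reduces to the two direct sum computations above, so I expect no serious obstacle.
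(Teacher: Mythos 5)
Your proof is correct. The paper does not prove this lemma at all --- it is quoted from reference [3] --- so there is no internal argument to compare against; your Rayleigh-quotient computation at the all-ones vector, together with the observation that every row sum of $D_{\alpha}(G)$ equals $Tr(v_i)$, is the standard and complete way to establish both the bound and the equality characterization. The only point worth flagging is the appeal to Perron--Frobenius for the converse direction: $D_{\alpha}(G)$ is irreducible only for $\alpha\in[0,1)$ (at $\alpha=1$ it is diagonal), but in that degenerate case a transmission regular graph gives $D_{1}(G)=t(G)I$ and the equality is immediate; alternatively you could avoid Perron--Frobenius entirely by noting that the spectral radius of a nonnegative matrix is at most its maximum row sum, which for a transmission regular graph equals $t(G)=\frac{2W(G)}{n}$.
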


For a graph $G$ and $v\in V(G)$, let $N_G(v)=|\{u:(v,u)\in E(G)\}|$.

\begin{lem} \textup{[3]}
Let $G$ be a graph with $n$ vertices. Let $S$ be a subset of $V(G)$ such that $N_G(x) = N_G(y)$ for any $x,y \in S$.\\
(1) If $S$ is an independent set, then $Tr(v)$ is a constant $h$ for each $v \in S$, and $D_\alpha(G)$ has $\alpha(h+2)-2$ as an eigenvalue with multiplicity at least $|S|-1$.\\
(2) If $S$ is a clique, then $Tr(v)$ is a constant $h^\ast$ for each $v \in S$, and $D_\alpha(G)$ has $\alpha(h^\ast+1)-1$ as an eigenvalue with multiplicity at least $|S|-1$.
\end{lem}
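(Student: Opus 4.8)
The plan is to treat both parts uniformly by exhibiting, for each pair $x,y\in S$, an explicit eigenvector of $D_\alpha(G)$ of the form $e_x-e_y$, where $e_u$ denotes the standard basis vector indexed by the vertex $u$. The engine behind the whole computation is that the hypothesis $N_G(x)=N_G(y)$ makes the transposition swapping $x$ and $y$ (and fixing every other vertex) an automorphism of $G$, since it preserves adjacency precisely because $x$ and $y$ have identical neighbourhoods. As automorphisms preserve graph distance, this yields the key identity $d(x,w)=d(y,w)$ for every $w\in V(G)\setminus\{x,y\}$.

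First I would use this distance identity to prove the transmission claim. Writing $Tr(x)=d(x,y)+\sum_{w\neq x,y}d(x,w)$ and the analogous expression for $Tr(y)$, the two sums agree term by term and $d(x,y)=d(y,x)$, so $Tr(x)=Tr(y)$; as $x,y\in S$ were arbitrary, the transmission is constant on $S$, call it $h$ in case (1) and $h^{\ast}$ in case (2).

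Next I would pin down $d(x,y)$ in each case. In case (1), $S$ is independent, so $x\not\sim y$; since $G$ is connected each vertex has a neighbour, and $x,y$ share their entire neighbourhood, so they have a common neighbour and therefore $d(x,y)=2$. In case (2), $S$ is a clique, so $x\sim y$ and $d(x,y)=1$. I would then compute $D_\alpha(G)(e_x-e_y)$ coordinatewise. For a coordinate $u\notin\{x,y\}$ the entry is $(1-\alpha)\bigl(d(u,x)-d(u,y)\bigr)=0$ by the distance identity. For $u=x$ the entry equals $\alpha\,Tr(x)-(1-\alpha)d(x,y)$ (using $d(x,x)=0$), and for $u=y$ it is its negative. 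Substituting $Tr(x)=h$ and $d(x,y)=2$ gives $\alpha(h+2)-2$ in case (1), while $Tr(x)=h^{\ast}$ and $d(x,y)=1$ gives $\alpha(h^{\ast}+1)-1$ in case (2). Hence $e_x-e_y$ is an eigenvector of $D_\alpha(G)$ with exactly the claimed eigenvalue.

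Finally, fixing a vertex $x_0\in S$, the vectors $\{e_{x_0}-e_y : y\in S\setminus\{x_0\}\}$ are linearly independent and all lie in the eigenspace for the stated eigenvalue, which forces multiplicity at least $|S|-1$. The only genuine subtlety I anticipate is the book-keeping around the neighbourhood hypothesis: in the clique case $x\sim y$, so the condition must be read as equality of the closed neighbourhoods (equivalently $N_G(x)\setminus\{y\}=N_G(y)\setminus\{x\}$) in order for the swap of $x$ and $y$ to be a genuine automorphism and for the common-distance identity to hold; I would make this reading explicit before running the computation.
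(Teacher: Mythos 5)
Your proof is correct and is essentially the standard argument: the paper states this lemma as a quoted result from [3] without giving a proof, and the proof in that reference is precisely your construction of the eigenvectors $e_x-e_y$ supported on pairs of vertices of $S$, together with the observations $d(x,y)=2$ (independent case) and $d(x,y)=1$ (clique case). Your closing remark --- that in the clique case the hypothesis must be read as equality of closed neighbourhoods, i.e.\ $N_G(x)\setminus\{y\}=N_G(y)\setminus\{x\}$, for the swap to be an automorphism --- is the right and necessary clarification of the statement as written.
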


From Lemma 2.2, we have the following result.
\begin{prop}
For a graph $G$, $D_\alpha(G)$ ($\alpha\in [0,1)$) has two distinct eigenvalues if and only if $G$ is a complete graph.
\end{prop}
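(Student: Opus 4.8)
The plan is to treat the two implications separately, using Lemma 2.2 for the easy direction and a minimal-polynomial/Perron argument for the hard one.

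For the ``if'' direction, suppose $G=K_n$. Take $S=V(G)$; this is a clique in which every vertex has the same degree $n-1$, so Lemma 2.2(2) applies and yields that $\alpha(h^\ast+1)-1=\alpha n-1$ (here $h^\ast=Tr(v)=n-1$) is an eigenvalue of $D_\alpha(G)$ of multiplicity at least $n-1$. Since $D_\alpha(G)$ is $n\times n$, its remaining eigenvalue is then forced by the trace $\mathrm{tr}(D_\alpha(G))=\alpha\sum_i Tr(v_i)=\alpha n(n-1)$ to be $n-1$ (equivalently, by Lemma 2.1 and transmission regularity of $K_n$, $\rho_\alpha(K_n)=2W(K_n)/n=n-1$, and this Perron value is simple). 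As $\alpha<1$ we have $n-1\neq\alpha n-1$, so $D_\alpha(K_n)$ has exactly the two distinct eigenvalues $n-1$ and $\alpha n-1$.

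For the ``only if'' direction, assume $D_\alpha(G)$ has exactly two distinct eigenvalues $\theta_1>\theta_2$. Because $D_\alpha(G)$ is nonnegative and (weakly) irreducible, Perron--Frobenius makes $\theta_1=\rho_\alpha(G)$ simple with a positive eigenvector $x$; hence $\theta_2$ has multiplicity $n-1$ with eigenspace $x^\perp$, and $D_\alpha(G)-\theta_2 I=\beta\,xx^{\top}$ is rank one, where $\beta=(\theta_1-\theta_2)/\|x\|^2>0$. Reading off the off-diagonal entries, where $D_\alpha(G)$ agrees with $(1-\alpha)D(G)$, gives $d(v_i,v_j)=C x_i x_j$ for all $i\neq j$, with $C=\beta/(1-\alpha)>0$ (using $\alpha<1$). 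The diagonal entries give $\alpha Tr(v_i)-\theta_2=\beta x_i^2$; substituting $Tr(v_i)=\sum_{j\neq i}d(v_i,v_j)=C x_i(\sigma-x_i)$ with $\sigma=\sum_k x_k$ yields the single quadratic $C x_i^2-\alpha C\sigma x_i+\theta_2=0$ satisfied by every coordinate $x_i$. Thus the entries of the Perron vector take at most two distinct values.

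It remains to convert this into completeness. If all $x_i$ are equal, then all distances $d(v_i,v_j)=Cx_ix_j$ are equal; since $G$ is connected it has an edge, so this common value is $1$, every pair of vertices is adjacent, and $G=K_n$. The main obstacle is the case in which the $x_i$ take two distinct values $p>q$: then the distances realize the three values $Cq^2<Cpq<Cp^2$, and this configuration must be excluded. Here I would argue from the graph metric itself --- each distance is a positive integer, the minimum distance in a connected graph is $1$, and every vertex must have a neighbour at distance $1$ --- and combine these facts with the triangle inequality, aiming to contradict $p>q$. Turning the very restrictive multiplicative pattern $d(v_i,v_j)=Cx_ix_j$ (together with integrality and connectivity) into an outright impossibility is the delicate step, and this two-value elimination is where the real work of the proof lies.
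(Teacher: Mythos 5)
The paper gives no argument for this proposition beyond the remark that it ``follows from Lemma 2.2,'' so there is no written proof to compare against; your attempt has to stand on its own. Your ``if'' direction is complete and correct. In the ``only if'' direction the reduction is also correct as far as it goes: $D_\alpha(G)-\theta_2I=\beta xx^{\top}$ with $x$ the Perron vector, hence $d(v_i,v_j)=Cx_ix_j$ off the diagonal, and every coordinate satisfies the single quadratic $Cx_i^2-\alpha C\sigma x_i+\theta_2=0$, so the $x_i$ take at most two values.

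However, the step you yourself flag as ``the delicate step'' is a genuine gap, and it cannot be closed, because the two-value configuration is actually realized by a non-complete graph. Take $G=S_n$ with the center carrying the smaller value $q$ and the leaves the value $p=2q$: then $Cpq=1$ and $Cp^2=2$ reproduce exactly the star metric, the third value $Cq^2$ never occurs as a distance because there is only one center, and the integrality/triangle-inequality attack you propose has nothing to bite on. Matching the diagonal condition $\alpha Tr(v_i)-\theta_2=\beta x_i^2$ forces $\alpha=3/(2n-1)$, and at that value $D_\alpha(S_n)$ really does have exactly two distinct eigenvalues: note $(2n-1)\alpha-2=1$ (the eigenvalue of multiplicity $n-2$ from Proposition 2.5), and the remaining quadratic factor also has $1$ as a root. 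Concretely, for $n=3$ and $\alpha=3/5$ the matrix $D_{3/5}(P_3)$ has spectrum $\{14/5,\,1,\,1\}$, and for $n=4$, $\alpha=3/7$, the spectrum of $D_{3/7}(S_4)$ is $\{33/7,\,1,\,1,\,1\}$. So the proposition, read for a fixed $\alpha\in[0,1)$, is false, and no completion of your argument exists. What can be salvaged is the version quantified over all $\alpha$: for $\alpha=0$ your diagonal condition reads $\beta x_i^2=-\theta_2$, so all $x_i$ are equal, all distances coincide, and connectivity forces them all to equal $1$, i.e.\ $G=K_n$; hence a graph whose $D_\alpha$ has two distinct eigenvalues for \emph{every} $\alpha\in[0,1)$ must be complete.
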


Let $S_{n}$ be a star with $n$ vertices.

\begin{lem}\textup{[4,14,20]}
Let $T$ be a tree with $n\geqslant 4$ vertices. Then
\begin{eqnarray*}
\rho_\alpha(T)\geqslant\rho_\alpha(S_n) = \frac{(\alpha + 2)n-4 + \sqrt{[ (\alpha + 2)n -4]^2 + 4[(n-1)(2\alpha-2n\alpha + 1)]}}{2},
\end{eqnarray*}
the equality holds if and only if $G \cong S_n$.
\end{lem}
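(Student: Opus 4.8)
The plan is to prove the statement in two independent pieces: first evaluate $\rho_{\alpha}(S_n)$ to obtain the displayed closed form (this is the equality case), and then establish the extremal inequality $\rho_{\alpha}(T)\ge\rho_{\alpha}(S_n)$ for every tree $T$ on $n$ vertices, together with the characterization of equality. For the evaluation I would exploit the symmetry of $S_n$. The $n-1$ leaves form an independent set of vertices of equal degree, so Lemma 2.2(1) applies with the common transmission $h=2n-3$ and yields the eigenvalue $\alpha(h+2)-2=\alpha(2n-1)-2$ of $D_{\alpha}(S_n)$ with multiplicity at least $n-2$. The two remaining eigenvalues come from the action of $D_{\alpha}(S_n)$ on the vectors that are constant on the leaf set, i.e.\ from the $2\times 2$ quotient matrix
\[
B=\begin{pmatrix} \alpha(n-1) & (1-\alpha)(n-1)\\[2pt] 1-\alpha & 2n-4+\alpha\end{pmatrix},
\]
where the $(1,1)$ entry is $\alpha\,Tr(\text{center})$, and the $(2,2)$ entry collects the leaf's diagonal term $\alpha(2n-3)$ with its distance contribution $2(1-\alpha)(n-2)$ from the other leaves.

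A direct computation gives $\operatorname{tr}B=(\alpha+2)n-4$ and $\det B=(n-1)\bigl(2\alpha(n-1)-1\bigr)$, so the two roots of $B$ are $\tfrac12\bigl(\operatorname{tr}B\pm\sqrt{(\operatorname{tr}B)^2-4\det B}\bigr)$; using $2\alpha(n-1)-1=-(2\alpha-2n\alpha+1)$ rewrites the radicand as $[(\alpha+2)n-4]^2+4(n-1)(2\alpha-2n\alpha+1)$, matching the formula in the statement. Since $D_{\alpha}(S_n)$ is nonnegative and (for $\alpha<1$) irreducible, its Perron eigenvector is positive and hence, by symmetry, constant on the leaves; thus it lies in the quotient space, so $\rho_{\alpha}(S_n)$ is exactly the larger root of $B$, which is the claimed value. (The degenerate case $\alpha=1$, where $D_{1}=Tr$ is diagonal, is checked directly: the radicand collapses to $(n-2)^2$ and $\rho_{1}(S_n)=2n-3$, the maximal transmission.)

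For the inequality I would use a grafting (branch-shifting) argument that drives an arbitrary tree toward the star without increasing $\rho_{\alpha}$. If $T\not\cong S_n$ then $\mathrm{diam}(T)\ge 3$; fixing a vertex $u$ of maximum degree, one detaches a pendant leaf lying at distance at least $2$ from $u$ and reattaches it directly to $u$, producing a tree $T'$ on the same labeled vertex set that is one step closer to $S_n$. To compare spectral radii I would apply the Rayleigh estimate with the (positive) Perron eigenvector $x'$ of $D_{\alpha}(T')$: since $x'$ is a test vector for $D_{\alpha}(T)$ and the genuine eigenvector for $D_{\alpha}(T')$,
\[
\rho_{\alpha}(T)-\rho_{\alpha}(T')\;\ge\;\frac{(x')^{\top}\bigl[D_{\alpha}(T)-D_{\alpha}(T')\bigr]x'}{(x')^{\top}x'},
\]
where $D_{\alpha}(T)-D_{\alpha}(T')=\alpha\bigl[Tr(T)-Tr(T')\bigr]+(1-\alpha)\bigl[D(T)-D(T')\bigr]$. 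Iterating the transformation terminates at $S_n$, and if each step gives a strict inequality unless $T$ is already the star, then $\rho_{\alpha}(T)\ge\rho_{\alpha}(S_n)$ with equality iff $T\cong S_n$.

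The main obstacle is the sign of the quadratic form in the last display. The transformation shortens distances from the moved vertex to the $u$-side of the tree but lengthens them toward the branch it was removed from, so $D_{\alpha}(T)-D_{\alpha}(T')$ is \emph{not} entrywise nonnegative and the weighted sum is not manifestly positive. Making this rigorous requires structural control of the Perron vector $x'$---in particular monotone decay of its entries as one moves away from the central vertex $u$ along each branch---so that the coordinates weighting the shortened (central) distances dominate those weighting the lengthened (peripheral) ones. Establishing this Perron-vector monotonicity along branches, which simultaneously fixes the correct direction of the inequality and supplies the strictness needed for the equality characterization, is the crux of the argument; the remaining pieces are the explicit bookkeeping of parts above and the finiteness of the iteration.
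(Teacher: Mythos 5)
The paper does not actually prove this lemma; it quotes it from the references [4,14,20], so your proposal has to be judged on its own. The first half of your argument is correct and complete: the leaves of $S_n$ form an independent set with common transmission $h=2n-3$, Lemma 2.2(1) gives the eigenvalue $\alpha(2n-1)-2$ with multiplicity $n-2$, and your quotient matrix $B$ is right (the $(2,2)$ entry $\alpha(2n-3)+2(1-\alpha)(n-2)=2n-4+\alpha$, $\operatorname{tr}B=(\alpha+2)n-4$, $\det B=(n-1)(2\alpha(n-1)-1)$), so the larger root of $B$ matches the displayed formula; since the partition is equitable and the Perron vector is constant on the leaf orbit, that root is indeed $\rho_\alpha(S_n)$. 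This part is essentially the same computation the paper performs for Proposition 2.5.

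The second half, however, contains a genuine gap which you yourself identify but do not close: the inequality $\rho_\alpha(T)\ge\rho_\alpha(T')$ for a single grafting step is exactly the hard content of the lemma, and your Rayleigh-quotient display only reduces it to showing $(x')^{\top}[D_\alpha(T)-D_\alpha(T')]x'\ge 0$, which you leave unproved. This is not a routine verification: when the pendant vertex $w$ is moved from its neighbour $p$ to $u$, the new distances satisfy $d_{T'}(w,v)=1+d(u,v)$ versus $d_T(w,v)=1+d(p,v)$, so the perturbation has entries of both signs (e.g.\ $d(w,p)$ jumps from $1$ to $1+d(u,p)\ge 3$), the diagonal (transmission) terms also move in both directions, and the required monotone decay of the Perron vector along branches is itself a nontrivial claim that would need its own proof. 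As it stands, nothing forces the quadratic form to be nonnegative, so the extremal statement and the equality characterization are not established. The cited sources avoid this difficulty by other means (for instance, for trees one can use the entrywise bound $D(T)\ge 2(J-I)-\widetilde{A}(T)$ together with $\lambda_1(\widetilde{A}(T))\le\sqrt{n-1}$ with equality only for $S_n$, or a quotient-matrix/interlacing argument); to complete your route you would have to either prove the Perron-vector monotonicity you invoke or replace the grafting step with one of these arguments.
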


\begin{prop}
The $\alpha$-distance spectra of $S_n$ consists of\\
(1) $(2n-1)\alpha-2$ with multiplicity $n-2$;\\
(2) $\frac{\alpha n + 2n -4 \pm \sqrt{(\alpha -2)^2n^2 + 8\alpha n -12n-8\alpha +12}}{2}$.
\end{prop}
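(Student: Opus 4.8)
The plan is to write down $D_\alpha(S_n)$ explicitly and then split its spectrum into a large "antisymmetric" piece (handled by Lemma 2.2) and a $2$-dimensional "symmetric" piece (handled by a quotient matrix). Label the center of the star $v_1$ and the leaves $v_2,\dots,v_n$. The distances are $d(v_1,v_j)=1$ and $d(v_i,v_j)=2$ for $2\le i<j\le n$, so the transmissions are $Tr(v_1)=n-1$ and $Tr(v_i)=2n-3$ for $i\ge 2$. Hence $D_\alpha(S_n)$ has diagonal $\bigl(\alpha(n-1),\alpha(2n-3),\dots,\alpha(2n-3)\bigr)$, off-diagonal center--leaf entries equal to $1-\alpha$, and off-diagonal leaf--leaf entries equal to $2(1-\alpha)$.

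For part (1), observe that the leaf set $S=\{v_2,\dots,v_n\}$ is an independent set in which every vertex has the same degree $N_{S_n}(v)=1$. Lemma 2.2(1) therefore applies with $h=2n-3$, and it produces the eigenvalue $\alpha(h+2)-2=\alpha(2n-1)-2=(2n-1)\alpha-2$ with multiplicity at least $|S|-1=n-2$. (If desired, one can confirm this directly: any vector $(0,x_2,\dots,x_n)$ with $\sum_{i\ge2}x_i=0$ is an eigenvector, since the center row sees $(1-\alpha)\sum x_i=0$ and each leaf row contracts to $[\alpha(2n-3)-2(1-\alpha)]x_i=[(2n-1)\alpha-2]x_i$.)

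For part (2), I would use the equitable partition $\{v_1\}\cup\{v_2,\dots,v_n\}$, which is equitable for $D_\alpha(S_n)$ by the symmetry just described; its quotient (divisor) matrix contributes the remaining two eigenvalues. Summing the relevant entries over each cell gives
\begin{equation*}
B=\begin{pmatrix} \alpha(n-1) & (n-1)(1-\alpha) \\ 1-\alpha & \alpha+2n-4 \end{pmatrix},
\end{equation*}
where the $(2,2)$ entry comes from $\alpha(2n-3)+2(1-\alpha)(n-2)=\alpha+2n-4$. Then $\operatorname{tr}B=\alpha n+2n-4$ and $\det B=(n-1)\bigl[2\alpha(n-1)-1\bigr]$, and the quadratic formula yields the eigenvalues $\tfrac12\bigl(\operatorname{tr}B\pm\sqrt{(\operatorname{tr}B)^2-4\det B}\bigr)$. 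Since the antisymmetric eigenspace of dimension $n-2$ is orthogonal to the cell-constant subspace, these two values together with the eigenvalue from part (1) exhaust all $n$ eigenvalues, which closes the argument.

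The only real obstacle is bookkeeping: one must check that the discriminant $(\operatorname{tr}B)^2-4\det B$ simplifies to the stated $(\alpha-2)^2n^2+8\alpha n-12n-8\alpha+12$. Expanding $(\alpha n+2n-4)^2$ and subtracting $4(n-1)[2\alpha(n-1)-1]$, the $\alpha^2n^2$, $\alpha n^2$, and $n^2$ terms collapse to $(\alpha-2)^2n^2$ and the lower-order terms give $8\alpha n-12n-8\alpha+12$, matching the claim; this is routine but the step where errors are most likely.
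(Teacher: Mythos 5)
Your proof is correct, and part (1) coincides with the paper's: both invoke Lemma 2.2(1) on the independent set of $n-1$ leaves with common transmission $h=2n-3$ to get the eigenvalue $\alpha(h+2)-2=(2n-1)\alpha-2$ with multiplicity at least $n-2$. Where you diverge is part (2). The paper simply writes ``From Lemma 2.4, the statement (2) holds,'' i.e.\ it imports the known closed form for $\rho_\alpha(S_n)$ from the cited references, which supplies only the ``$+$'' eigenvalue; the ``$-$'' eigenvalue is left implicit (recoverable from the trace identity $\sum_i\sigma_i=2\alpha W$, or from the product of the remaining eigenvalues). You instead compute both remaining eigenvalues in one stroke from the quotient matrix of the equitable partition $\{v_1\}\cup\{v_2,\dots,v_n\}$, checking that $\operatorname{tr}B=\alpha n+2n-4$ and $\det B=(n-1)[2\alpha(n-1)-1]$ reproduce the stated discriminant $(\alpha-2)^2n^2+8\alpha n-12n-8\alpha+12$ (your arithmetic here is right). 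Your route is self-contained and arguably cleaner: it does not depend on the external extremality result behind Lemma 2.4, it explains why exactly two eigenvalues remain (the cell-constant subspace is invariant and orthogonal to the $(n-2)$-dimensional eigenspace from part (1)), and the same quotient-matrix device is one the paper itself uses later (Lemma 3.11 and Proposition 3.12). What the paper's route buys is brevity, at the cost of leaving the second eigenvalue of statement (2) without an explicit derivation.
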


\begin{proof}
Since $S_{n}$ satisfies the statement (1) of Lemma 2.2, and the number of the vertices in the independent set is $n-1$ and the transimission of each vertex in the independent set is $2n-3$, we have  $(2n-1)\alpha-2$ is an eigenvalue of $D_\alpha(S_{n})$ with multiplicity at least $n-2$. From Lemma 2.4, the statement (2) holds.
\end{proof}

Let $K_n$ be a complete graph with $n$ vertices.

\begin{lem}\textup{[9]}
Let $G$ be a graph with $n$ vertices. Then
\begin{eqnarray*}
W(G)\geq \frac{n(n-1)}{2},
\end{eqnarray*}
the equality holds if and only if $G \cong K_n$.
\end{lem}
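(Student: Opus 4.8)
The plan is a direct term-by-term estimate of the Wiener index together with a count of the summands. First I would rewrite the definition $W(G) = \frac{1}{2}\sum_{i\neq j} d(v_i,v_j)$ as a sum over \emph{unordered} pairs, $W(G) = \sum_{1\leq i<j\leq n} d(v_i,v_j)$, so that the factor $\frac{1}{2}$ is absorbed and there are exactly $\binom{n}{2} = \frac{n(n-1)}{2}$ terms, each being the distance between two distinct vertices.

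The key observation is that, because $G$ is connected, any two distinct vertices $v_i,v_j$ are joined by at least one path, whence $d(v_i,v_j)\geq 1$; moreover $d(v_i,v_j)=1$ holds if and only if $v_i v_j\in E(G)$. Summing the inequality $d(v_i,v_j)\geq 1$ over all $\binom{n}{2}$ unordered pairs immediately yields $W(G)\geq \binom{n}{2} = \frac{n(n-1)}{2}$, which is the asserted bound.

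For the equality characterization, equality in the summed inequality forces $d(v_i,v_j)=1$ for every unordered pair, i.e.\ every pair of distinct vertices is adjacent, which is exactly the statement $G\cong K_n$. Conversely, in $K_n$ every pairwise distance equals $1$, so $W(K_n)=\binom{n}{2}$ and equality is attained. I do not anticipate any genuine obstacle here: the result is elementary, and the only points demanding care are the bookkeeping of the factor $\frac{1}{2}$ when passing to unordered pairs and the remark that distances in a connected graph are well-defined and at least one.
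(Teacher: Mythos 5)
Your proof is correct and complete: rewriting $W(G)$ as a sum over the $\binom{n}{2}$ unordered pairs, bounding each distance below by $1$ using connectedness, and noting that equality forces every pair to be adjacent is exactly the standard elementary argument. The paper itself states this lemma without proof, citing reference [9], so there is no in-paper argument to compare against; your proposal supplies the expected routine verification with no gaps.
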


Next, we give some bounds for the $\alpha$-distance energy of a graph and characterize the $\alpha$-distance energy of $S_n$ by using the parameter $\alpha$ and the vertex number.

\begin{thm}
Let $G$ be a connected graph with $n$ vertices. Then
\begin{align*}
\varsigma_{\alpha}(G) \geq 2(1-\alpha)(n-1),~\alpha \in [1/2,1),
\end{align*}
the equality holds if and only if $G\cong K_n$.
\end{thm}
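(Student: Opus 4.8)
The plan is to recognize that the shift $\frac{2\alpha W(G)}{n}$ appearing in $\varsigma_{\alpha}(G)$ is exactly the arithmetic mean of the $\alpha$-distance spectrum. Indeed, since the diagonal of $D(G)$ is zero and $\mathrm{tr}(Tr(G))=\sum_i Tr(v_i)=2W(G)$, one has $\sum_{i=1}^n\sigma_i(G)=\mathrm{tr}(D_{\alpha}(G))=2\alpha W(G)$, so the mean of the $\sigma_i(G)$ equals $\mu:=\frac{2\alpha W(G)}{n}$. Hence $\varsigma_{\alpha}(G)=\sum_{i=1}^n|\sigma_i(G)-\mu|$ is the $\ell^1$-deviation of the spectrum from its own mean, and the signed deviations satisfy $\sum_{i=1}^n(\sigma_i(G)-\mu)=0$.

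Given this, I would first establish the inequality by a short chain. Because the deviations sum to zero, the positive deviations and the absolute values of the negative deviations each total $\tfrac12\varsigma_{\alpha}(G)$; in particular $\varsigma_{\alpha}(G)=2\sum_{\sigma_i(G)\ge\mu}(\sigma_i(G)-\mu)\ge 2(\sigma_1(G)-\mu)$, since $\sigma_1(G)\ge\mu$ contributes one of the nonnegative terms. Next I would invoke Lemma 2.1, which gives $\sigma_1(G)=\rho_{\alpha}(G)\ge\frac{2W(G)}{n}$, so that $\sigma_1(G)-\mu\ge(1-\alpha)\frac{2W(G)}{n}$. Finally Lemma 2.6 yields $\frac{2W(G)}{n}\ge n-1$, and combining the three estimates gives $\varsigma_{\alpha}(G)\ge 2(1-\alpha)(n-1)$.

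For the equality characterization, the cleanest route is to note that the bound is a chain of inequalities, so overall equality forces each link to be tight; in particular equality in the Wiener-index step $\frac{2W(G)}{n}\ge n-1$ must hold, and by Lemma 2.6 this happens only when $G\cong K_n$. Conversely I would verify directly that $K_n$ attains the bound: from $D(K_n)=J-I$ and $Tr(K_n)=(n-1)I$ the $\alpha$-distance spectrum is $n-1$ with multiplicity $1$ and $\alpha n-1$ with multiplicity $n-1$, while $\mu=\alpha(n-1)$, and substituting these into $\varsigma_{\alpha}$ gives $(n-1)(1-\alpha)+(n-1)(1-\alpha)=2(1-\alpha)(n-1)$.

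The computations are routine, so the only genuinely delicate point is the equality analysis. I expect the main care to be in observing that I need not analyze the equality condition of the reduction step $\varsigma_{\alpha}(G)\ge 2(\sigma_1(G)-\mu)$ separately: because Lemma 2.6 already supplies a full ``iff'' characterization of its own equality case, tightness of that single link suffices to force $G\cong K_n$, and the forward (``if'') direction is then settled by the explicit $K_n$ computation. It is also worth checking that $\sigma_1(G)>\mu$ strictly, which holds since $\frac{2W(G)}{n}>0$ and $\alpha<1$, so that $\sigma_1(G)-\mu$ is a bona fide positive deviation; note that only $\alpha<1$ is actually used in the lower bound, consistent with the stated hypothesis $\alpha\in[1/2,1)$.
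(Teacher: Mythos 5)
Your proposal is correct and follows essentially the same route as the paper: the identity $\varsigma_{\alpha}(G)=2\sum_{\sigma_i\ge\mu}(\sigma_i-\mu)\ge 2(\sigma_1(G)-\mu)$ from the zero-sum of deviations, then Lemma 2.1 and Lemma 2.6 to reach $2(1-\alpha)(n-1)$. Your treatment of the equality case is in fact slightly more careful than the paper's, since you explicitly verify that $K_n$ attains the bound via its $\alpha$-distance spectrum rather than just asserting the characterization.
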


\begin{proof}
Let $\sigma_1(G)\geq \sigma_2(G)\geq \cdots\geq \sigma_n(G)$ be the eigenvalues of $D_\alpha(G)$.
It is easy to see that
\begin{align}
\sum_{i=1}^n \sigma_i(G) = {\rm Trace}(D_\alpha(G))=\sum_{i=1}^n \alpha Tr(v_i) =n \alpha t(G) = 2\alpha W(G).
\end{align}
Lemma 2.1 gives $\sigma_1(G)\geq t(G)\geq \alpha t(G)$. Suppose that $ \iota $ is the largest number such that $\sigma_\iota(G)\geq  \alpha t(G)$. And it follows from Equation (3) that
\begin{align*}
\varsigma_{\alpha}(G)&=\sum_{i=1}^\iota (\sigma_i(G) - \alpha t(G)) + \sum_{i=\iota+1}^n ( \alpha t(G)-\sigma_i (G))\\
                    &= \sum_{i=1}^\iota \sigma_i (G) - \iota \alpha t(G) + (n - \iota) \alpha t(G) - \sum_{i=\iota+1}^n \sigma_i(G)\\
                    &=\sum_{i=1}^\iota \sigma_i (G) - \iota \alpha t(G) +\sum_{i=1}^n \sigma_i(G)- \iota \alpha t(G) - \sum_{i=\iota+1}^n \sigma_i(G)\\
                    &= 2\sum_{i=1}^{\iota}(\sigma_i (G) - \alpha t(G))\\
                    &\geq  2( \sigma_1(G)- \alpha t(G)).
\end{align*}
From Lemmas 2.1 and 2.6, we have
\begin{align*}
2( \sigma_1(G)- \alpha t(G)) \geq 2(\frac{2W(G)}{n}- \alpha\frac{2W(G)}{n})= 4(1-\alpha)\frac{W(G)}{n}\geq 2(1-\alpha)(n-1),
\end{align*}
and $\varsigma_{\alpha}(G)=2(1-\alpha)(n-1)$ if and only if $G\cong K_n$.
\end{proof}

\begin{thm}
The $\alpha$-distance energy of $S_{n}$ is
\begin{align*}
&\varsigma_{\alpha}(S_{n}) =\\
& \frac{\alpha (-3n^2+8n-4) + 2n^2 -4n + n\sqrt{(\alpha -2)^2n^2 + 8\alpha n -12n-8\alpha +12}}{2n} \\
&+|\frac{\alpha (-3n^2+8n-4) + 2n^2 -4n - n\sqrt{(\alpha -2)^2n^2 + 8\alpha n -12n-8\alpha +12}}{2n}|\\
&+ |3\alpha - 2-\frac{2\alpha}{n}|(n-2).
\end{align*}
\end{thm}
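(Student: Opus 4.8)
The plan is to evaluate the defining sum $\varsigma_\alpha(S_n) = \sum_{i=1}^n \bigl|\sigma_i(S_n) - \frac{2\alpha W(S_n)}{n}\bigr|$ term by term, since Proposition 2.5 already supplies the complete $\alpha$-distance spectrum of $S_n$. First I would compute the Wiener index of the star: there are $n-1$ center-to-leaf pairs at distance $1$ and $\binom{n-1}{2}$ leaf-to-leaf pairs at distance $2$, so $W(S_n) = (n-1) + (n-1)(n-2) = (n-1)^2$, and hence the common shift subtracted from every eigenvalue is $\frac{2\alpha W(S_n)}{n} = \frac{2\alpha (n-1)^2}{n}$.

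Next I would subtract this shift from each of the three eigenvalues listed in Proposition 2.5 and simplify. For the $(n-2)$-fold eigenvalue $(2n-1)\alpha - 2$, a short computation gives $(2n-1)\alpha - 2 - \frac{2\alpha(n-1)^2}{n} = 3\alpha - 2 - \frac{2\alpha}{n}$, which produces the last summand $|3\alpha - 2 - \frac{2\alpha}{n}|(n-2)$. For the two remaining eigenvalues $\sigma_{\pm} = \frac{\alpha n + 2n - 4 \pm \sqrt{D}}{2}$ with $D = (\alpha-2)^2 n^2 + 8\alpha n - 12n - 8\alpha + 12$, placing the shift over the common denominator $2n$ yields the numerator $\alpha(-3n^2 + 8n - 4) + 2n^2 - 4n \pm n\sqrt{D}$, which matches the first two displayed summands.

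The one point requiring more than algebra is the sign handling. For the larger root $\sigma_+$, note that it equals $\rho_\alpha(S_n)$: its linear part $\frac{(\alpha+2)n-4}{2}$ and its radicand $D$ coincide with the expression in Lemma 2.4. Hence Lemma 2.1 gives $\sigma_+ = \rho_\alpha(S_n) \geq \frac{2W(S_n)}{n} \geq \frac{2\alpha W(S_n)}{n}$, the last inequality using $0 \leq \alpha \leq 1$. Therefore $\sigma_+ - \frac{2\alpha W(S_n)}{n} \geq 0$ and its absolute value can be dropped, which explains why the first summand appears without bars. For the smaller root $\sigma_-$ and for the repeated eigenvalue the sign varies with $\alpha$ and $n$, so I would retain the absolute values, exactly as the statement records.

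I expect the main (and essentially only) obstacle to be careful bookkeeping in the algebraic simplification, in particular confirming that the shifted numerator of $\sigma_{\pm}$ collapses to $\alpha(-3n^2 + 8n - 4) + 2n^2 - 4n \pm n\sqrt{D}$ and that $D$ agrees with the radicand of Lemma 2.4; the sole structural ingredient is the appeal to Lemma 2.1 to fix the sign of the leading term.
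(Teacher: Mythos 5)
Your proposal is correct and follows essentially the same route as the paper: the paper likewise computes $t(S_n)=2n-4+\tfrac{2}{n}$ and then evaluates $\sum_i|\sigma_i(S_n)-\alpha t(S_n)|$ directly from the spectrum in Proposition 2.5. Your additional verification that the larger root coincides with $\rho_\alpha(S_n)$ from Lemma 2.4 (so that Lemma 2.1 justifies dropping the absolute value on the first summand) is a detail the paper leaves implicit, but it does not change the argument.
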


\begin{proof}
We know that 
$
t(S_{n})=2n-4+\frac{2}{n}.
$
And from Proposition 2.5, we obtain
\begin{align*}
\varsigma_{\alpha}(S_{n})& =\sum_{i=1}^{n}|\sigma_i(G) -\alpha t(S_{n})|\\
& =\frac{\alpha (-3n^2+8n-4) + 2n^2 -4n + n\sqrt{(\alpha -2)^2n^2 + 8\alpha n -12n-8\alpha +12}}{2n} \\
&+|\frac{\alpha (-3n^2+8n-4) + 2n^2 -4n - n\sqrt{(\alpha -2)^2n^2 + 8\alpha n -12n-8\alpha +12}}{2n}|\\
&+ |3\alpha - 2-\frac{2\alpha}{n}|(n-2).
\end{align*}
\end{proof}

Let $\| M \| _F$ denote the Frobenius norm of a matrix $M=( m_{i,j} )\in \mathbb{C}^{n\times n}$, that is
$$
\| M \| _F = \sqrt{\sum_{i=1}^n \sum_{j=1}^n |m_{i,j}|^2 }.
$$
We know that  $\| M \| _F^2 = \sum_{i=1}^n|\tilde{\lambda}_i(M)|^2$, where  $\tilde{\lambda}_1(M),\ldots,\tilde{\lambda}_n(M)$ are the eigenvalues of $M$.
For a graph $G$ with $n$ vertices,
\begin{align*}
\sum_{i=1}^n \xi_i^2(G) = 2|\sum_{i=1}^{n-1}\sum_{j=i+1}^n \xi_i(G)\xi_j(G)|=\|D(G)\|_F^2,
\end{align*}
where $\xi_1(G),\xi_2(G),\ldots,\xi_n(G)$ are the eigenvalues of $D(G)$ (see [7,8]).
Similarly to the above equation, we have
\begin{align}
\sum_{i=1}^n \sigma_i^2(G) = \|D_{\alpha}(G)\|_F^2 = \alpha ^2 \sum_{i=1}^n Tr^2(v_i) + (1-\alpha)^2\|D(G)\|^2_F,
\end{align}
where $\sigma_1(G),\sigma_2(G),\ldots,\sigma_n(G)$ are the eigenvalues of $D_\alpha(G)$.

\begin{lem}
\textup{[8]}
Let $G$ be a graph with $n$ vertices. Then
\begin{eqnarray*}
\|D(G) \|_F^2 < \frac{1}{n} (\sum_{i=1}^n Tr(v_i) )^2.
\end{eqnarray*}
\end{lem}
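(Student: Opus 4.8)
The plan is to avoid the Cauchy--Schwarz inequality, which here only bounds $\|D(G)\|_F^2$ from below and so points the wrong way, and instead to exploit that every entry of the distance matrix is squeezed between $0$ and the diameter $\Delta:=\max_{i,j}d(v_i,v_j)$. First I would record the two elementary facts that translate the claim into a statement about the diameter and the Wiener index: $\|D(G)\|_F^2=\sum_{i=1}^n\sum_{j=1}^n d(v_i,v_j)^2$, and $\sum_{i=1}^n Tr(v_i)=\sum_{i=1}^n\sum_{j=1}^n d(v_i,v_j)=2W(G)$. Thus the target inequality is equivalent to $\sum_{i,j}d(v_i,v_j)^2<\frac1n\bigl(2W(G)\bigr)^2$.

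The key step is the entrywise estimate $d(v_i,v_j)^2=d(v_i,v_j)\cdot d(v_i,v_j)\le \Delta\, d(v_i,v_j)$, which holds because $0\le d(v_i,v_j)\le\Delta$ for all $i,j$. Summing over all ordered pairs gives
\[
\|D(G)\|_F^2\le \Delta\sum_{i=1}^n\sum_{j=1}^n d(v_i,v_j)=2\Delta\,W(G).
\]
It then suffices to prove $2\Delta\,W(G)\le\frac1n\bigl(2W(G)\bigr)^2$, that is, $\Delta\le\frac{2W(G)}{n}$.

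To establish $\Delta\le\frac{2W(G)}{n}$ I would combine two observations. Since $G$ is connected on $n$ vertices, any shortest path uses at most $n-1$ edges, so $\Delta\le n-1$. Moreover every transmission satisfies $Tr(v_i)=\sum_{j\ne i}d(v_i,v_j)\ge n-1$, because each of the $n-1$ off-diagonal distances is at least $1$; equivalently Lemma 2.6 gives $W(G)\ge\frac{n(n-1)}{2}$, whence $\frac{2W(G)}{n}\ge n-1\ge\Delta$. Chaining the estimates yields $\|D(G)\|_F^2\le 2\Delta\,W(G)\le\frac{2W(G)}{n}\cdot 2W(G)=\frac1n\bigl(\sum_{i}Tr(v_i)\bigr)^2$.

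I expect the main difficulty to be conceptual rather than computational: seeing that the right comparison is $d(v_i,v_j)^2\le\Delta\, d(v_i,v_j)$ together with $\Delta\le\frac{2W(G)}{n}$, since all the standard moment inequalities bound $\|D(G)\|_F^2$ from below. The remaining care is the strictness. Equality in $d(v_i,v_j)^2\le\Delta\,d(v_i,v_j)$ forces every off-diagonal distance to equal $\Delta$, which is possible only when $\Delta=1$, i.e. $G\cong K_n$; and for $K_n$ with $n\ge3$ the second estimate is strict because $\Delta=1<n-1=\frac{2W(G)}{n}$. Hence the inequality is strict for every connected graph other than $K_2$, and I would either restrict to $n\ge3$ or flag $K_2$ as the single equality case.
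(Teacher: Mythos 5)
Your argument is correct, and it is worth noting that the paper itself offers no proof of this statement: Lemma 2.9 is quoted from reference [8] (D\'iaz--Rojo) and used as a black box. Your proposal therefore supplies a self-contained elementary proof where the paper has none. The chain $\|D(G)\|_F^2=\sum_{i,j}d(v_i,v_j)^2\le \Delta\sum_{i,j}d(v_i,v_j)=2\Delta W(G)$ together with $\Delta\le n-1\le \frac{1}{n}\sum_i Tr(v_i)=\frac{2W(G)}{n}$ (the last step being exactly Lemma 2.6 of the paper) is sound, and your analysis of strictness is the right one: equality in the first step forces all off-diagonal distances to equal $\Delta$, hence $\Delta=1$ and $G\cong K_n$, while for $K_n$ with $n\ge 3$ the second step is strict. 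Your observation that $K_2$ gives equality ($\|D(K_2)\|_F^2=2=\frac{1}{2}(1+1)^2$) is a genuine and correct caveat --- the lemma as transcribed in the paper, with no hypothesis beyond connectedness, is literally false for $n=2$ (and vacuously for $n=1$), so the restriction to $n\ge 3$ you propose is necessary. The one presentational slip is the phrase ``Equality in $d(v_i,v_j)^2\le\Delta d(v_i,v_j)$ forces every off-diagonal distance to equal $\Delta$'': strictly, equality in a single term allows $d=0$ or $d=\Delta$, and it is only because off-diagonal distances are at least $1$ that equality in the \emph{sum} forces them all to equal $\Delta$; you clearly intend this, but it deserves one explicit sentence.
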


From Lemma 2.9, the following result can be obtained directly.
\begin{prop}
Let $G$ be a connected graph with $n$ vertices. Then 
\begin{align*}
\|D_{\alpha}(G) \|_F < \sqrt{\alpha ^2 \sum_{i=1}^n Tr^2(v_i) +  (1-\alpha)^2 \frac{1}{n} (\sum_{i=1}^n Tr(v_i) )^2}.
\end{align*}
\end{prop}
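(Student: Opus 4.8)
The plan is to read this off Equation (4) and Lemma 2.9 with essentially no extra work, since the proposition is explicitly flagged as obtainable directly. Equation (4) records the exact identity
\[
\|D_{\alpha}(G)\|_F^2 = \alpha^2 \sum_{i=1}^n Tr^2(v_i) + (1-\alpha)^2 \|D(G)\|_F^2,
\]
so the squared Frobenius norm splits cleanly into a transmission term that I leave untouched and a distance-matrix term $\|D(G)\|_F^2$ that Lemma 2.9 controls. The whole argument is then a single substitution: replace $\|D(G)\|_F^2$ by the strictly larger quantity $\tfrac{1}{n}\bigl(\sum_{i=1}^n Tr(v_i)\bigr)^2$ furnished by Lemma 2.9.

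Concretely, I would first multiply the Lemma 2.9 bound by the nonnegative scalar $(1-\alpha)^2$, obtaining $(1-\alpha)^2 \|D(G)\|_F^2 \le (1-\alpha)^2 \tfrac{1}{n}\bigl(\sum_{i=1}^n Tr(v_i)\bigr)^2$, and then add the common term $\alpha^2 \sum_{i=1}^n Tr^2(v_i)$ to both sides. Comparing with Equation (4) yields
\[
\|D_{\alpha}(G)\|_F^2 \;<\; \alpha^2 \sum_{i=1}^n Tr^2(v_i) + (1-\alpha)^2 \frac{1}{n}\Bigl(\sum_{i=1}^n Tr(v_i)\Bigr)^2 .
\]
Since both sides are nonnegative, taking the (monotone) square root preserves the inequality and gives exactly the stated bound.

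The only point that needs a moment of care is the \emph{strictness} of the inequality, which is the sole place the hypotheses matter. Multiplying a strict inequality by $(1-\alpha)^2$ keeps it strict precisely when $(1-\alpha)^2 > 0$, i.e.\ when $\alpha \neq 1$; for $\alpha = 1$ one has $D_1(G) = Tr(G)$, the distance term drops out entirely, and the bound degenerates to an equality. So the statement is understood for $\alpha \in [0,1)$, and connectedness of $G$ is what guarantees that the transmissions $Tr(v_i)$ (and hence $\|D(G)\|_F$) are finite and that Lemma 2.9 applies. There is no genuine obstacle here: once the strictness bookkeeping is noted, the proposition is an immediate corollary of Equation (4) and Lemma 2.9.
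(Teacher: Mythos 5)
Your proof is correct and is exactly the argument the paper intends: the paper offers no written proof beyond the remark that the proposition follows directly from Lemma 2.9, and your substitution of the Lemma 2.9 bound into Equation (4) is precisely that derivation. Your observation that strictness fails at $\alpha=1$ is a valid refinement the paper silently glosses over, but it does not change the approach.
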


In the following, we establish some bounds for the $\alpha$-distance energy of a graph $G$ by using the Frobenius norm of $D(G)$, transmissions of vertices and $ W(G)$.

\begin{thm}
Let $G$ be a graph with $n$ vertices. Then
\begin{align*}
\varsigma_{\alpha}(G) \leq \sqrt{ nZ(G)},
\end{align*}
where $Z(G) = (1-\alpha)^2\|D(G)\|_F^2 + \alpha^2 \sum_{i=1}^n(Tr(v_i)-\frac{2W(G)}{n})^2$.
\end{thm}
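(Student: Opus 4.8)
The plan is to reduce the bound to a single application of the Cauchy--Schwarz inequality together with the two moment identities already recorded in Equations (3) and (4). Writing $t(G)=\frac{2W(G)}{n}$ and setting $\mu_i = \sigma_i(G)-\alpha t(G)$, the $\alpha$-distance energy is exactly $\varsigma_\alpha(G)=\sum_{i=1}^n|\mu_i|$. First I would apply Cauchy--Schwarz to the vectors $(|\mu_1|,\dots,|\mu_n|)$ and $(1,\dots,1)$, obtaining
$$\varsigma_\alpha(G)=\sum_{i=1}^n |\mu_i|\cdot 1 \le \sqrt{\sum_{i=1}^n \mu_i^2}\,\sqrt{\sum_{i=1}^n 1} = \sqrt{n\sum_{i=1}^n \mu_i^2}.$$
So the whole problem is to show that $\sum_{i=1}^n \mu_i^2$ equals $Z(G)$.

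Next I would expand the second moment as $\sum_{i=1}^n \mu_i^2 = \sum_{i=1}^n \sigma_i^2(G) - 2\alpha t(G)\sum_{i=1}^n\sigma_i(G) + n\alpha^2 t^2(G)$. Here Equation (3) gives $\sum_{i=1}^n\sigma_i(G)=2\alpha W(G)=n\alpha t(G)$, so the last two terms collapse to $-n\alpha^2 t^2(G)$, and Equation (4) gives $\sum_{i=1}^n\sigma_i^2(G)=\alpha^2\sum_{i=1}^n Tr^2(v_i)+(1-\alpha)^2\|D(G)\|_F^2$. Hence
$$\sum_{i=1}^n \mu_i^2 = (1-\alpha)^2\|D(G)\|_F^2 + \alpha^2\sum_{i=1}^n Tr^2(v_i) - n\alpha^2 t^2(G).$$

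Finally I would expand the defining quadratic of $Z(G)$: since $\sum_{i=1}^n Tr(v_i)=2W(G)=n\,t(G)$, expanding $(Tr(v_i)-\frac{2W(G)}{n})^2=(Tr(v_i)-t(G))^2$ and summing gives $\sum_{i=1}^n(Tr(v_i)-t(G))^2=\sum_{i=1}^n Tr^2(v_i)-n\,t^2(G)$, so that $Z(G)=(1-\alpha)^2\|D(G)\|_F^2+\alpha^2\sum_{i=1}^n Tr^2(v_i)-n\alpha^2 t^2(G)$, matching the displayed second moment exactly. Combining the three steps yields $\varsigma_\alpha(G)\le\sqrt{nZ(G)}$. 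There is no genuine obstacle here: the only point to watch is the consistent use of the mean shift $\alpha t(G)=\frac{2\alpha W(G)}{n}$ both in the definition of $\varsigma_\alpha$ and in the two expansions, so that the linear cross terms cancel correctly; beyond that the argument is the standard ``energy-type'' Cauchy--Schwarz estimate and requires only the already-established trace and Frobenius-norm identities.
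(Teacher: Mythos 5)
Your proposal is correct and follows essentially the same route as the paper: both center the eigenvalues at $\frac{2\alpha W(G)}{n}$, compute the second moment of the centered values via the trace identities (Equations (3) and (4)) to obtain exactly $Z(G)$, and apply the Cauchy--Schwarz inequality against the all-ones vector. The only difference is the order in which the two steps are presented, which is immaterial.
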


\begin{proof}
Let
$\eta_i(G) = \sigma_i(G) - \frac{ 2\alpha W(G)}{n}$, where $i=1,\ldots,n$.
Clearly, $\sum_{i=1}^n\eta_i(G) = 0.$
From Equation (4), we have
\begin{align*}
&\sum_{i=1}^n(\eta_i)^2 \\
&=\sum_{i=1}^n(\sigma_i(G) - \frac{2\alpha W(G)}{n})^2\\
                        &= \sum_{i=1}^n (\sigma_i(G))^2 - \sum_{i=1}^n \frac{4\alpha \sigma_i(G) W(G)}{n} + \sum_{i=1}^n \frac{4\alpha^2W^2(G)}{n^2}\\
                        &= (1-\alpha)^2\|D(G)\|_F^2 + \alpha^2 \sum_{i=1}^nTr^2(v_i)- \frac{4\alpha W(\alpha) }{n}\sum_{i=1}^n  \sigma_i(G) + \sum_{i=1}^n \frac{4\alpha^2W^2(G)}{n^2}\\
                        &= (1-\alpha)^2\|D(G)\|_F^2 + \alpha^2 \sum_{i=1}^nTr^2(v_i)- \frac{4\alpha W(\alpha) }{n}\sum_{i=1}^n \alpha Tr(v_i) + \sum_{i=1}^n \frac{4\alpha^2W^2(G)}{n^2}\\
                        &= (1-\alpha)^2\|D(G)\|_F^2 + \alpha^2 \sum_{i=1}^n(Tr(v_i)-\frac{2W(G)}{n})^2.
\end{align*}
Let
$
Z(G)=(1-\alpha)^2\|D(G)\|_F^2 + \alpha^2 \sum_{i=1}^n(Tr(v_i)-\frac{2W(G)}{n})^2.
$
By Cauchy-Schwar$z'$s inequality, we have
\begin{align*}
(\varsigma_{\alpha}(G))^2 = ( \sum_{i=1}^n |\eta_i|)^2 \leq \sum_{i=1}^n (\eta_i(G))^2 \sum_{i=1}^n 1 =  nZ(G).
\end{align*}
\end{proof}

\begin{lem}
\textup{[3]}
Let $G$ be a graph with $n$ vertices. Then
\begin{eqnarray*}
Trace (D_{\alpha}(G))=\sum_{i=1}^n \sigma_i(G) = \alpha\sum_{i=1}^n Tr(v_i) =2\alpha W(G),\\
Trace (D_{\alpha}^2(G))=\sum_{i=1}^n \sigma_i^2(G) = \alpha^2 \sum_{i=1}^n Tr^2(v_i) + 2(1-\alpha)^2S,
\end{eqnarray*}
where $\sigma_1(G),\sigma_2(G),\ldots,\sigma_n(G)$ are the eigenvalues of $D_\alpha(G)$.
\end{lem}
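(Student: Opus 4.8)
The plan is to prove both identities by direct matrix computation, exploiting the fact that $D_\alpha(G)$ is real symmetric so that its eigenvalue power sums coincide with traces of matrix powers, and that the distance matrix $D(G)$ has zero diagonal.

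First I would handle the trace identity. Since $d(v_i,v_i)=0$, the diagonal entries of $D(G)$ vanish, so the $i$-th diagonal entry of $D_\alpha(G)=\alpha Tr(G)+(1-\alpha)D(G)$ is exactly $\alpha Tr(v_i)$. Because the trace of a matrix equals the sum of its eigenvalues, this gives
\begin{align*}
\sum_{i=1}^n \sigma_i(G)=\mathrm{Trace}(D_\alpha(G))=\alpha\sum_{i=1}^n Tr(v_i).
\end{align*}
Then I would observe that each unordered pair $\{v_i,v_j\}$ contributes its distance twice to $\sum_i Tr(v_i)=\sum_i\sum_{j\neq i}d(v_i,v_j)$, so $\sum_{i=1}^n Tr(v_i)=2W(G)$, yielding the value $2\alpha W(G)$. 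This step mirrors Equation (3) already established in the excerpt.

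For the second identity I would use that for a symmetric matrix $M$ the eigenvalues of $M^2$ are the squares of those of $M$, hence $\sum_i\sigma_i^2(G)=\mathrm{Trace}(D_\alpha^2(G))=\|D_\alpha(G)\|_F^2$, the Frobenius norm identity quoted before Lemma 2.9. I would then split the sum of squared entries into the diagonal and off-diagonal parts: the diagonal entries contribute $\alpha^2\sum_{i=1}^n Tr^2(v_i)$, while the off-diagonal entries are $(1-\alpha)d(v_i,v_j)$, contributing $(1-\alpha)^2\sum_{i\neq j}d^2(v_i,v_j)$. Finally I would record that $\sum_{i\neq j}d^2(v_i,v_j)=2\sum_{1\leq i<j\leq n}d^2(v_i,v_j)=2S$, which collapses the off-diagonal contribution to $2(1-\alpha)^2 S$ and completes the computation.

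There is no genuine obstacle here; the argument is a bookkeeping exercise and in fact reuses the same decomposition of $\|D_\alpha(G)\|_F^2$ appearing in Equation (4). The only points requiring a moment of care are the zero diagonal of $D(G)$ (so that the cross terms $\alpha Tr(v_i)\cdot(1-\alpha)d(v_i,v_i)$ never arise) and the factor of two relating the double sum $\sum_{i\neq j}$ to the strictly ordered sum defining both $W(G)$ and $S$.
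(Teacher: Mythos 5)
Your proof is correct, and since the paper only cites this lemma from reference [3] without reproving it, the natural comparison is with the paper's own in-text derivations: your trace computation is exactly Equation (3), and your Frobenius-norm decomposition into the diagonal part $\alpha^2\sum_{i=1}^n Tr^2(v_i)$ and the off-diagonal part $(1-\alpha)^2\|D(G)\|_F^2 = 2(1-\alpha)^2 S$ is exactly Equation (4). So you have taken essentially the same route the paper itself relies on; nothing is missing.
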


\begin{thm}
Let $G$ be a graph with $n$ vertices. Then
\begin{eqnarray*}
\varsigma_{\alpha}(G) \leq \sqrt{(\alpha^2\sum_{i=1}^nTr^2(v_i) + 2(1-\alpha)^2S )n-4\alpha^2W^2(G)}.
\end{eqnarray*}

\end{thm}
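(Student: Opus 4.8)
The plan is to follow exactly the template used in the proof of Theorem 2.11, but to feed in the trace identities of Lemma 2.12 in place of Equation (4). First I would set $\eta_i(G) = \sigma_i(G) - \frac{2\alpha W(G)}{n}$ for $i=1,\ldots,n$, so that by definition $\varsigma_\alpha(G) = \sum_{i=1}^n |\eta_i(G)|$. Using the first identity of Lemma 2.12, namely $\sum_{i=1}^n \sigma_i(G) = 2\alpha W(G)$, one checks immediately that $\sum_{i=1}^n \eta_i(G) = 2\alpha W(G) - n\cdot\frac{2\alpha W(G)}{n} = 0$.

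Next I would compute $\sum_{i=1}^n \eta_i^2(G)$ by expanding the square and substituting both trace identities of Lemma 2.12. Expanding gives
$$\sum_{i=1}^n \eta_i^2(G) = \sum_{i=1}^n \sigma_i^2(G) - \frac{4\alpha W(G)}{n}\sum_{i=1}^n \sigma_i(G) + \frac{4\alpha^2 W^2(G)}{n}.$$
Substituting $\sum_{i=1}^n \sigma_i(G) = 2\alpha W(G)$ collapses the last two terms into $-\frac{4\alpha^2 W^2(G)}{n}$, while substituting $\sum_{i=1}^n \sigma_i^2(G) = \alpha^2 \sum_{i=1}^n Tr^2(v_i) + 2(1-\alpha)^2 S$ yields
$$\sum_{i=1}^n \eta_i^2(G) = \alpha^2 \sum_{i=1}^n Tr^2(v_i) + 2(1-\alpha)^2 S - \frac{4\alpha^2 W^2(G)}{n}.$$

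Finally I would apply the Cauchy-Schwarz inequality in the form $\bigl(\sum_{i=1}^n |\eta_i(G)|\bigr)^2 \le n \sum_{i=1}^n \eta_i^2(G)$, exactly as in Theorem 2.11, and then take square roots. Multiplying the sum-of-squares expression above by $n$ turns the term $-\frac{4\alpha^2 W^2(G)}{n}$ into $-4\alpha^2 W^2(G)$, which is precisely the claimed bound. There is essentially no obstacle here: every step is either a direct substitution from Lemma 2.12 or the standard Cauchy-Schwarz estimate, and the only point requiring care is tracking the constant $-\frac{4\alpha^2 W^2(G)}{n}$ correctly through the expansion. I would also remark that, because $\|D(G)\|_F^2 = 2S$ and $\sum_{i=1}^n\bigl(Tr(v_i)-\frac{2W(G)}{n}\bigr)^2 = \sum_{i=1}^n Tr^2(v_i) - \frac{4W^2(G)}{n}$, this bound in fact coincides with the one in Theorem 2.11; the statement is a reformulation in terms of $S$ rather than the Frobenius norm.
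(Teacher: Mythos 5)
Your proof is correct and follows essentially the same route as the paper: both apply Cauchy--Schwarz to $\sum_{i=1}^n \left|\sigma_i(G)-\frac{2\alpha W(G)}{n}\right|$ and then use the two trace identities of Lemma 2.12 to evaluate $\sum_{i=1}^n\left(\sigma_i(G)-\frac{2\alpha W(G)}{n}\right)^2$. Your closing observation that this bound coincides with the one in Theorem 2.11 (via $\|D(G)\|_F^2=2S$) is also accurate.
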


\begin{proof}
From Cauchy-Schwar$z'$s inequality，we have
\begin{eqnarray*}
(\varsigma_{\alpha}(G))^2=(\sum_{i=1}^n |\sigma_i(G)-\frac{2\alpha W(G)}{n}|)^2 \leq \sum_{i=1}^n |\sigma_i(G)-\frac{2\alpha W(G)}{n}|^2 \sum_{i=1}^n 1.
\end{eqnarray*}%
It follows from Lemma 2.12 that

\begin{align*}
\varsigma_{\alpha}(G)     &\leq \sqrt{n\sum_{i=1}^n|\sigma_i(G)-\frac{2\alpha W(G)}{n}|^2}\\
                        &=  \sqrt{n\sum_{i=1}^n\sigma_i(G)^2- 4\alpha^2 W^2(G)}\\
                        &=  \sqrt{(\alpha^2\sum_{i=1}^nTr^2(v_i) + 2(1-\alpha)^2S )n-4\alpha^2W^2(G)}.
\end{align*}

\end{proof}

\section{Bounds for the $\alpha$-distance spectrum of graphs}
\begin{lem}
\textup{[8]} Let $x_1> x_2 \geq \ldots \geq x_n>0$ be $n$ real numbers. Then
\begin{eqnarray*}
\sum_{i=1}^n|x_i - M| < \frac{n}{2}x_1,
\end{eqnarray*}
where $M=\frac{\sum_{1=1}^n x_i}{n}$.
\end{lem}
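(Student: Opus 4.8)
The plan is to measure the total deviation from the mean through its positive part alone, and then to turn the two obvious one-sided estimates into the sharp constant by multiplying them rather than adding them.

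First I would set $P=\sum_{x_i>M}(x_i-M)$. Since $\sum_{i=1}^n(x_i-M)=0$, the contributions above and below the mean cancel, so the terms equal to $M$ drop out and $P=\sum_{x_i<M}(M-x_i)$ as well; consequently $\sum_{i=1}^n|x_i-M|=2P$. Writing $p=|\{i:x_i>M\}|$ and $q=|\{i:x_i<M\}|$, the single strict hypothesis $x_1>x_2$ rules out all the $x_i$ being equal, so $x_n<M<x_1$; together with $x_i>0$ this gives $p,q\ge 1$ and $p+q\le n$.

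Next I would record the two natural bounds. From the upper terms, $x_i-M\le x_1-M$ yields $P\le p(x_1-M)$; from the lower terms, $M-x_i<M$ (strict, because every $x_i>0$) yields $P<qM$. Simply adding these would only give $2P<nx_1$, which is off by a factor of $2$, so that route fails. The key step is to multiply them instead: since $0\le P\le p(x_1-M)$ and $0\le P<qM$ with positive factors, I get $P^2\le p(x_1-M)\,P< p(x_1-M)\,qM$, that is, $P^2<pq\,M(x_1-M)$.

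Finally I would apply the AM--GM inequality twice on the right: $pq\le\big(\tfrac{p+q}{2}\big)^2\le\tfrac{n^2}{4}$ using $p+q\le n$, and $M(x_1-M)\le\tfrac{x_1^2}{4}$. Combining these gives $P^2<\tfrac{n^2 x_1^2}{16}$, hence $P<\tfrac{n}{4}x_1$ and $\sum_{i=1}^n|x_i-M|=2P<\tfrac{n}{2}x_1$, with the strictness already furnished by the strict step $P<qM$. The main obstacle is precisely this factor $\tfrac{n}{2}$: each one-sided estimate is individually too weak by roughly that factor, and the whole point is to realize that multiplying the estimates and then optimizing separately over the split $p+q\le n$ and over $M\in(0,x_1)$ is what produces the correct constant (indeed it is attained in the limit by a balanced configuration with $p=q$ and $M\to x_1/2$).
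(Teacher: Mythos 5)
Your argument is correct, and in fact the paper offers nothing to compare it against: Lemma~3.1 is quoted from reference [8] and used as a black box, with no proof reproduced here. Checking your steps: writing $\sum_{i=1}^n|x_i-M|=2P$ with $P=\sum_{x_i>M}(x_i-M)$ is justified by $\sum_i(x_i-M)=0$; the strict inequality $x_1>x_2$ does force $x_n<M<x_1$, so $p,q\ge 1$, $P>0$, and $p(x_1-M)>0$, which is exactly what you need to chain $P^2\le p(x_1-M)P<pq\,M(x_1-M)$; and the two AM--GM applications, $pq\le n^2/4$ (using $p+q\le n$) and $M(x_1-M)\le x_1^2/4$ (using $0<M<x_1$), give $P<\tfrac{n}{4}x_1$ and hence the claimed strict bound. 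Your observation that naively adding the one-sided estimates $P\le p(x_1-M)$ and $P<qM$ loses a factor of $2$, while multiplying them and optimizing separately over the split and over $M$ recovers the sharp constant, is precisely the right diagnosis of where the difficulty lies; this is the standard device used in the energy literature for bounds of this shape, so your proof is a faithful and complete replacement for the omitted one.
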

It follows from the above lemma the following result holds directly.
\begin{prop}
For a graph $G$ with $n$ vertices, let $\sigma_1(G)$ be the largest eigenvalue of $D_\alpha(G)$. For $\alpha \in[\frac{1}{2},1)$,
\begin{eqnarray*}
n\sigma_1(G) > 2 \varsigma_{\alpha}(G).
\end{eqnarray*}
\end{prop}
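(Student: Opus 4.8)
The plan is to recognize this as a direct specialization of Lemma 3.1 to the sequence of $\alpha$-distance eigenvalues. First I would identify the mean $M$ in Lemma 3.1 with the shift $\frac{2\alpha W(G)}{n}$ appearing in the definition of $\varsigma_\alpha(G)$. By Lemma 2.12 the eigenvalues satisfy $\sum_{i=1}^n \sigma_i(G) = 2\alpha W(G)$, so their average is exactly $M = \frac{1}{n}\sum_{i=1}^n \sigma_i(G) = \frac{2\alpha W(G)}{n}$. Hence, taking $x_i = \sigma_i(G)$ in Lemma 3.1, the left-hand side $\sum_{i=1}^n |x_i - M|$ is literally $\varsigma_\alpha(G)$, and the lemma's conclusion $\sum_{i=1}^n |x_i - M| < \frac{n}{2}x_1$ becomes $\varsigma_\alpha(G) < \frac{n}{2}\sigma_1(G)$, which is the desired inequality after multiplying by $2$.

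The substance of the argument is therefore checking that the hypotheses of Lemma 3.1 hold for $x_1 = \sigma_1(G) \geq \cdots \geq x_n = \sigma_n(G)$, namely the strict inequality $\sigma_1(G) > \sigma_2(G)$ and the positivity $\sigma_n(G) > 0$. The strict gap is free: for $\alpha \in [\frac12,1)$ the matrix $D_\alpha(G)$ is nonnegative and (weakly) irreducible, so by the Perron-Frobenius theorem its spectral radius $\sigma_1(G) = \rho_\alpha(G)$ is a simple eigenvalue, forcing $\sigma_1(G) > \sigma_2(G)$.

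The step I expect to be the main obstacle is establishing $\sigma_n(G) > 0$, and it is exactly here that the restriction $\alpha \geq \frac12$ is used. I would write $D_\alpha(G) = (2\alpha - 1)Tr(G) + (1-\alpha)\mathcal{Q}(G)$, where $\mathcal{Q}(G) = Tr(G)+D(G) = 2D_{1/2}(G)$ is the distance signless Laplacian. For $\alpha \in (\frac12,1)$ the summand $(2\alpha-1)Tr(G)$ is a diagonal matrix with strictly positive diagonal (all transmissions of a connected graph are positive), hence positive definite, while $(1-\alpha)\mathcal{Q}(G)$ is positive semidefinite; their sum is positive definite and so $\sigma_n(G) > 0$, completing the verification. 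The delicate endpoint is $\alpha = \frac12$, where the decomposition only gives $D_{1/2}(G) = \frac12\mathcal{Q}(G)$ and positivity reduces to the positive-definiteness of $\mathcal{Q}(G)$; this holds for connected graphs in general but degenerates for the smallest bipartite case (for $K_2$ one finds $\sigma_2(G) = 0$ and the inequality becomes an equality), so this boundary case would need to be isolated and handled separately.
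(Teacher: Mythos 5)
Your proposal is exactly the paper's route: the paper offers no written proof beyond ``follows from the above lemma directly,'' meaning precisely the application of Lemma 3.1 to $x_i=\sigma_i(G)$ with $M=\frac{2\alpha W(G)}{n}$ that you describe. The value of your write-up is that you actually check the hypotheses $x_1>x_2$ and $x_n>0$, which the paper silently assumes; your decomposition $D_\alpha(G)=(2\alpha-1)Tr(G)+(1-\alpha)\mathcal{Q}(G)$ settles positive definiteness for $\alpha\in(\tfrac12,1)$, and for $\alpha=\tfrac12$ the identity $x^{\rm T}\mathcal{Q}(G)x=\sum_{i<j}d(v_i,v_j)(x_i+x_j)^2$ gives $\sigma_n(G)>0$ for every connected graph with $n\ge 3$ (the kernel condition $x_i+x_j=0$ for all $i<j$ forces $x=0$ once $n\ge 3$, so bipartiteness is not the relevant obstruction --- only $n=2$ is). Your $K_2$ observation is a genuine counterexample to the proposition as stated: there $\sigma_1=1$, $\sigma_2=0$, $\varsigma_{1/2}(K_2)=1$ and $n\sigma_1=2\varsigma_{1/2}(K_2)$, so the strict inequality claimed by the paper fails at $(K_2,\alpha=\tfrac12)$ and the statement should exclude this case.
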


For a matrix $M=(m_{ij})\in\mathbb{ C}^{n\times n}$, let $R_i(M)=\sum_{j=1}^n m_{ij}$ be the $i$-th row sum of $M$, $i=1,2,\cdots,n$.

\begin{lem}
\textup{[7,19]} Let $A=(a_{i,j})\in \mathbb{R}^{n\times n}$ be a nonnegative matrix. Let $\rho (A)$ be the spectral radius of $A$. Then
\begin{align*}
\min_{1\leq i\leq n}R_i(A)\leq \rho(A) \leq \max_{1\leq i\leq n}R_i(A).
\end{align*}
Further, if $A$ is an irreducible matrix, then the above two equalities hold if and only if $R_1(A)=R_2(A)=\cdots=R_n(A)$.
\end{lem}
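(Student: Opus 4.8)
The plan is to base everything on the Perron--Frobenius theorem, which guarantees that for a nonnegative matrix $A$ the spectral radius $\rho(A)$ is itself an eigenvalue admitting a nonnegative eigenvector $v\ge 0$, $v\neq 0$, and that when $A$ is irreducible this eigenvector is strictly positive and $\rho(A)$ is a simple eigenvalue. Both inequalities then fall out of a single coordinate-comparison trick applied to the eigenequation $Av=\rho(A)v$.

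First I would establish the upper bound. Fix a nonnegative eigenvector $v$ for $\rho(A)$ and let $k$ be an index attaining $v_k=\max_i v_i$, which is positive since $v\ge 0$ is nonzero. Reading off the $k$-th coordinate of $Av=\rho(A)v$ gives $\rho(A)v_k=\sum_j a_{kj}v_j\le v_k\sum_j a_{kj}=v_kR_k(A)\le v_k\max_i R_i(A)$, where the first inequality uses $v_j\le v_k$ together with $a_{kj}\ge 0$. Dividing by $v_k>0$ yields $\rho(A)\le\max_i R_i(A)$.

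For the lower bound, the cleanest route when $A$ is irreducible is symmetric: with $v>0$ the Perron vector and $m$ an index minimizing $v_m>0$, the $m$-th coordinate of $Av=\rho(A)v$ gives $\rho(A)v_m=\sum_j a_{mj}v_j\ge v_m R_m(A)\ge v_m\min_i R_i(A)$, so $\rho(A)\ge\min_i R_i(A)$. To cover a general nonnegative $A$, whose Perron vector may have zero entries and thereby break the minimum-entry argument, I would pass to a perturbation: apply the irreducible case to $A+\varepsilon J$, where $J$ is the all-ones matrix, note that $R_i(A+\varepsilon J)=R_i(A)+\varepsilon n$ and that the eigenvalues depend continuously on the entries, and let $\varepsilon\to 0^+$. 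This recovers the plain inequality for every nonnegative $A$.

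Finally comes the equality analysis, which assumes irreducibility and is where the real work sits. Suppose $\rho(A)=\max_i R_i(A)=:r$ with strictly positive eigenvector $v$, and set $M=\max_j v_j$, attained at some index $p$. The chain from the upper-bound step collapses to equalities, forcing $\sum_j a_{pj}(M-v_j)=0$; since each summand is a product of nonnegative factors, $a_{pj}>0$ implies $v_j=M$. Thus the index set $\{j:v_j=M\}$ is invariant under the support pattern of $A$, and irreducibility forces it to be all of $\{1,\dots,n\}$, so $v$ is a constant vector and $A\mathbf{1}=r\mathbf{1}$, i.e.\ every row sum equals $r$. The converse is immediate, since equal row sums make $\mathbf{1}$ a positive eigenvector whose eigenvalue must be $\rho(A)$, and the $\min$ case is handled identically. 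I expect this last closedness-under-$A$ step---upgrading the pointwise implication ``$a_{pj}>0\Rightarrow v_j=M$'' to ``$v$ is constant'' via irreducibility---to be the main obstacle, as it is precisely the place where the irreducibility hypothesis is indispensable.
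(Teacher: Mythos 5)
The paper does not prove this lemma at all: it is quoted as a known result from references [7,19] (a standard row-sum bound for the spectral radius of a nonnegative matrix), so there is no in-paper argument to compare yours against. Your proposal is a correct, self-contained proof along the classical Perron--Frobenius lines: the maximal-coordinate comparison for the upper bound, the minimal-coordinate comparison plus an $\varepsilon J$-perturbation and continuity of eigenvalues for the lower bound in the possibly reducible case, and the ``level set of the Perron vector is closed under the support of $A$, hence equals everything by irreducibility'' argument for the equality case, with the converse following because a positive eigenvector of an irreducible nonnegative matrix necessarily belongs to $\rho(A)$. The only presentational nit is in the equality analysis: you derive ``$a_{pj}>0\Rightarrow v_j=M$'' only for one maximizing index $p$ and then assert invariance of the whole set $\{j: v_j=M\}$; to justify that you should note that the same collapsed chain applies verbatim to \emph{every} index $q$ with $v_q=M$ (since $\rho(A)v_q=\sum_j a_{qj}v_j\leq M R_q(A)\leq Mr=\rho(A)v_q$). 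With that one sentence added, the argument is complete and proves exactly what the cited lemma asserts.
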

From the above Lemma, it is easy to see the following result holds.
\begin{prop}
Let $G$ be a graph with $n$ vertices. Let $\rho (D_{\alpha}(G))$ be the spectral radius of $D_{\alpha}(G)$. Then

\begin{eqnarray*}
\min_{1\leq i\leq n} Tr(v_i)\leq \rho(D_{\alpha}(G)) \leq \max_{1\leq i\leq n}Tr(v_i),
\end{eqnarray*}
the equality holds  if and only if $G$ is a transmission regular graph.
\end{prop}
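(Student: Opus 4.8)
The plan is to read off the row sums of $D_{\alpha}(G)$ explicitly and then feed them into Lemma 3.3. The central observation is that the $i$-th row sum of $D_{\alpha}(G)$ is exactly the transmission $Tr(v_i)$, independently of $\alpha$. To see this, note that $Tr(G)$ is diagonal, so its $i$-th row contributes only $\alpha Tr(v_i)$, while $D(G)$ has zero diagonal and $i$-th row sum $\sum_{j=1}^{n} d(v_i,v_j) = Tr(v_i)$, contributing $(1-\alpha)Tr(v_i)$. Hence
\begin{eqnarray*}
R_i(D_{\alpha}(G)) = \alpha Tr(v_i) + (1-\alpha)Tr(v_i) = Tr(v_i), \qquad i=1,\ldots,n.
\end{eqnarray*}

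Next I would verify the hypotheses of Lemma 3.3. Since $\alpha\in[0,1]$, every entry of $D_{\alpha}(G)=\alpha Tr(G)+(1-\alpha)D(G)$ is nonnegative (the diagonal entries $\alpha Tr(v_i)$ and the off-diagonal entries $(1-\alpha)d(v_i,v_j)$ are all $\geq 0$), so $D_{\alpha}(G)$ is a nonnegative matrix. Applying the inequality chain of Lemma 3.3 together with the row-sum computation immediately yields
\begin{eqnarray*}
\min_{1\leq i\leq n}Tr(v_i) = \min_{1\leq i\leq n}R_i(D_{\alpha}(G)) \leq \rho(D_{\alpha}(G)) \leq \max_{1\leq i\leq n}R_i(D_{\alpha}(G)) = \max_{1\leq i\leq n}Tr(v_i),
\end{eqnarray*}
which is the asserted two-sided bound.

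For the equality characterization I would invoke the irreducible case of Lemma 3.3. For a connected graph $G$ and $\alpha\in[0,1)$, every off-diagonal entry $(1-\alpha)d(v_i,v_j)$ is strictly positive, so $D_{\alpha}(G)$ is irreducible; Lemma 3.3 then forces equality in either bound if and only if $R_1(D_{\alpha}(G))=\cdots=R_n(D_{\alpha}(G))$, that is $Tr(v_1)=\cdots=Tr(v_n)$, which is precisely the statement that $G$ is transmission regular. The only delicate point is the endpoint $\alpha=1$, where $D_1(G)=Tr(G)$ is diagonal and hence not irreducible in the usual sense; there one checks directly that $\rho(D_1(G))=\max_i Tr(v_i)$, so the upper equality is automatic while the lower equality still forces transmission regularity, leaving the stated characterization intact. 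Since the argument is essentially a direct substitution into an already-established lemma, I do not anticipate a genuine obstacle beyond carefully bookkeeping the irreducibility at this boundary value.
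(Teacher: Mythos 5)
Your proof is correct and takes the same route as the paper, which simply derives this proposition from Lemma 3.3 by observing that every row sum of $D_{\alpha}(G)$ equals $Tr(v_i)$ and that $D_{\alpha}(G)$ is a nonnegative irreducible matrix for a connected graph. Your treatment is in fact more careful than the paper's one-line justification, though note that your own observation about $\alpha=1$ (where the upper equality $\rho(D_1(G))=\max_i Tr(v_i)$ holds for every graph) actually shows the ``if and only if'' fails at that endpoint rather than remaining intact, so the characterization should be restricted to $\alpha\in[0,1)$.
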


\begin{lem}
\textup{[22]}
For a graph $G$ with $n$ vertices, let $\lambda_1(G)$ be the largest eigenvalue of the signless Laplacian matrix $\widetilde{Q}(G)$. Let $p(x)$ be a polynomial on $x$. Then
\begin{eqnarray*}
\min_{1\leq i\leq n } R_i(p(\widetilde{Q}(G)))   \leq p(\lambda_1(G)) \leq \max_{1\leq i\leq n } R_i(p(\widetilde{Q}(G))).
\end{eqnarray*}
Moreover, if the row sums of $ p(\tilde{Q}(G))$ are not all equal, then both inequalities  are strict.
\end{lem}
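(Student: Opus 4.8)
The plan is to exploit the symmetry of $\widetilde{Q}(G)$ together with the Perron eigenvector belonging to $\lambda_1(G)$, turning $p(\lambda_1(G))$ into a weighted average of the row sums of $p(\widetilde{Q}(G))$. First I would recall that for a connected graph $G$ the matrix $\widetilde{Q}(G)=\widetilde{D}(G)+\widetilde{A}(G)$ is a nonnegative, symmetric, irreducible matrix, so by the Perron--Frobenius theorem its spectral radius equals its largest eigenvalue $\lambda_1(G)$ and there is a strictly positive eigenvector $u=(u_1,\ldots,u_n)^{\top}>0$ with $\widetilde{Q}(G)u=\lambda_1(G)u$. Since $p(\widetilde{Q}(G))$ is a polynomial in the symmetric matrix $\widetilde{Q}(G)$, it is again symmetric, and $u$ is one of its eigenvectors, namely $p(\widetilde{Q}(G))u=p(\lambda_1(G))u$.

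The key step is to evaluate the scalar $\mathbf{1}^{\top}p(\widetilde{Q}(G))u$, where $\mathbf{1}$ denotes the all-ones vector, in two different ways. On one hand, using $p(\widetilde{Q}(G))u=p(\lambda_1(G))u$, it equals $p(\lambda_1(G))\sum_{i=1}^n u_i$. On the other hand, since $p(\widetilde{Q}(G))$ is symmetric, $\mathbf{1}^{\top}p(\widetilde{Q}(G))u=(p(\widetilde{Q}(G))\mathbf{1})^{\top}u=\sum_{i=1}^n R_i(p(\widetilde{Q}(G)))\,u_i$, because the $i$-th entry of $p(\widetilde{Q}(G))\mathbf{1}$ is exactly the row sum $R_i(p(\widetilde{Q}(G)))$. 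Equating the two expressions yields
\[
p(\lambda_1(G))=\frac{\sum_{i=1}^n R_i(p(\widetilde{Q}(G)))\,u_i}{\sum_{i=1}^n u_i}.
\]

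This identity exhibits $p(\lambda_1(G))$ as a weighted average of the row sums $R_i(p(\widetilde{Q}(G)))$ with strictly positive weights $u_i/\sum_j u_j$. A weighted average with positive weights always lies between the smallest and the largest of the averaged quantities, which gives $\min_{1\le i\le n} R_i(p(\widetilde{Q}(G)))\le p(\lambda_1(G))\le\max_{1\le i\le n} R_i(p(\widetilde{Q}(G)))$. Moreover, if the row sums are not all equal, then a convex combination with all weights strictly positive cannot attain either extreme value, so both inequalities are strict; this settles the ``moreover'' clause.

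I expect the main (and essentially only) obstacle to be securing the existence of a strictly positive eigenvector for $\lambda_1(G)$, which is where connectedness and hence irreducibility of $\widetilde{Q}(G)$ is needed so that Perron--Frobenius applies. It is worth emphasizing that no monotonicity or sign condition on $p$ is required: the argument rests entirely on the weighted-average identity above rather than on $p(\lambda_1(G))$ being an extreme eigenvalue of $p(\widetilde{Q}(G))$, which is why the conclusion holds for an arbitrary polynomial $p$.
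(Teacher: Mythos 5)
Your proof is correct and follows essentially the same route the paper itself takes: the paper quotes this lemma from [22] without proof, but its proof of the direct analogue (Proposition 3.6 for $D_{\alpha}(G)$) is exactly your argument --- take the positive Perron eigenvector, note $p(\widetilde{Q}(G))$ has the same eigenvector with eigenvalue $p(\lambda_1(G))$, and express $p(\lambda_1(G))$ as a weighted average of the row sums via symmetry. Your write-up is in fact slightly more careful than the paper's, since you make explicit both the use of symmetry to convert column sums into row sums and the strictness argument for the ``moreover'' clause.
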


Inspired by the above result, we give the following bounds on the largest eigenvalue of $D_{\alpha}(G)$.
\begin{prop}
For a graph $G$ with $n$ vertices, let $\sigma_1(G)$ be the largest eigenvalue of $D_\alpha(G)$.
Let $p(x)$ be a polynomial on $x$. For $\alpha \in[\frac{1}{2},1)$,
\begin{eqnarray*}
\min_{1\leq i\leq n}R_i(p(D_{\alpha}(G)))\leq p(\sigma_1(G)) \leq \max_{1\leq i\leq n}R_i(p(D_{\alpha}(G))).
\end{eqnarray*}
If the row sums of $p(D_{\alpha}(G))$ are not all equal, then both inequalities are strict.
\end{prop}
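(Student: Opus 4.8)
The plan is to follow the structure of Lemma 3.4 (the signless Laplacian analogue), but to work directly with the Perron eigenvector of $D_\alpha(G)$ rather than relying on any positivity of $p(D_\alpha(G))$ itself. First I would record the relevant Perron structure: for $\alpha\in[\frac12,1)$ the matrix $D_\alpha(G)=\alpha Tr(G)+(1-\alpha)D(G)$ is symmetric and nonnegative, and its off-diagonal entries $(1-\alpha)d(v_i,v_j)$ are strictly positive because $G$ is connected and $\alpha<1$; hence $D_\alpha(G)$ is irreducible. By the Perron--Frobenius theorem there is a strictly positive vector $x=(x_1,\ldots,x_n)^\top>0$ with $D_\alpha(G)\,x=\rho_\alpha(G)\,x=\sigma_1(G)\,x$.

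Next I set $B=p(D_\alpha(G))$. Since $B$ is a polynomial in the symmetric matrix $D_\alpha(G)$, it is itself symmetric and shares the eigenvector $x$, so $Bx=p(\sigma_1(G))\,x$. The key step is to evaluate the scalar $x^\top B\,\mathbf{1}$ (with $\mathbf{1}$ the all-ones vector) in two ways. Using $Bx=p(\sigma_1(G))\,x$ together with the symmetry identity $x^\top B=(Bx)^\top$ gives $x^\top B\,\mathbf{1}=p(\sigma_1(G))\sum_{i=1}^n x_i$, while reading off the $i$-th coordinate of $B\,\mathbf{1}$ as the row sum $R_i(B)$ gives $x^\top B\,\mathbf{1}=\sum_{i=1}^n x_i\,R_i(B)$. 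Equating the two yields
\begin{align*}
p(\sigma_1(G))=\frac{\sum_{i=1}^n x_i\,R_i(p(D_\alpha(G)))}{\sum_{i=1}^n x_i}.
\end{align*}
Thus $p(\sigma_1(G))$ is a weighted average of the row sums $R_i(p(D_\alpha(G)))$ with strictly positive weights $x_i$, and the two-sided bound $\min_{1\le i\le n}R_i(p(D_\alpha(G)))\le p(\sigma_1(G))\le \max_{1\le i\le n}R_i(p(D_\alpha(G)))$ is immediate.

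For the strictness I would argue that, because every weight $x_i$ is strictly positive, a positively weighted average of the numbers $R_i(p(D_\alpha(G)))$ attains its minimum (resp. maximum) only when all of these row sums coincide; hence if the row sums of $p(D_\alpha(G))$ are not all equal, both inequalities are strict. The main obstacle I anticipate is conceptual rather than computational: $p(D_\alpha(G))$ need not be a nonnegative matrix, so Lemma 3.3 cannot be applied to it directly, and the whole point is to avoid doing so. The device that resolves this is to use the positive Perron eigenvector $x$ of $D_\alpha(G)$ (which exists precisely because $\alpha<1$ forces $D_\alpha(G)$ to be irreducible) as the weighting that realizes $p(\sigma_1(G))$ as a genuine convex combination of the row sums; the strict positivity of each $x_i$ is exactly what upgrades the bounds to strict inequalities in the non-transmission-regular case.
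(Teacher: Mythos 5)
Your proposal is correct and follows essentially the same route as the paper: both take the positive Perron eigenvector $x$ of $D_\alpha(G)$, note that $p(D_\alpha(G))x=p(\sigma_1(G))x$, and evaluate $\mathbf{1}^{\rm T}p(D_\alpha(G))x$ in two ways (the paper normalizes $\sum_i x_i=1$ rather than dividing at the end) to exhibit $p(\sigma_1(G))$ as a positively weighted average of the row sums, from which the bounds and their strictness follow.
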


\begin{proof}
Perron-Frobenius Theorem gives that there exists a positive vector $\mathbf{x}=(x_1,\ldots,x_n)^{\rm T}$ such that $D_{\alpha}(G)\mathbf{x} = \sigma_1(G)\mathbf{x}$. Then
\begin{eqnarray*}
p(D_{\alpha}(G))\mathbf{x} = p(\sigma_1(G))\mathbf{x}.
\end{eqnarray*}
Let $\sum_{i=1}^n x_i =1$. Then

\begin{align*}
p(\sigma_1(G))&= p(\sigma_1(G)) \sum_{i=1}^n x_i = \sum_{i=1}^n p(\sigma_1(G))x_i = \sum_{i=1}^n(p(D_{\alpha}(G))\mathbf{x})_i\\
& = \sum_{i=1}^n x_i R_i(p(D_{\alpha}(G))).
\end{align*}
Hence,
\begin{eqnarray*}
\min_{1\leq i\leq n}R_i(p(D_{\alpha}(G)))\leq  \sum_{i=1}^n x_i R_i(p(D_{\alpha}(G))) \leq \max_{1\leq i\leq n}R_i(p(D_{\alpha}(G))).
\end{eqnarray*}
\end{proof}

\begin{lem}
For a graph $G$ with $n$ vertices, let $\mathbf{T}=\max\{Tr(v)|{v\in V(G)}\}$ and $\mathbf{t}=\min\{Tr(v)$ $|{v\in V(G)}\}$. Then for each vertex $u\in V(G)$,
$$2W(G) +(\mathbf{t}-1)Tr(u) - (n-1)\mathbf{t} \leq \sum_{v\neq u}d(u,v)Tr(v)  \leq2W(G) +(\mathbf{T}-1)Tr(u) - (n-1)\mathbf{T}.$$
\end{lem}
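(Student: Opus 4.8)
\textbf{The plan.} The key idea is to peel off a copy of the distance $1$ from each term, writing $d(u,v) = 1 + (d(u,v)-1)$, which turns the weights into the nonnegative quantities $d(u,v)-1 \geq 0$ (nonnegativity holds because $G$ is connected, so $d(u,v)\geq 1$ for every $v\neq u$). This is exactly the manipulation that makes the uniform bounds $\mathbf{t}\leq Tr(v)\leq \mathbf{T}$ usable: one may only pull such a bound through a weighted sum when all the weights share a sign. Splitting the sum accordingly gives
\begin{align*}
\sum_{v\neq u} d(u,v)Tr(v) = \sum_{v\neq u} Tr(v) + \sum_{v\neq u}(d(u,v)-1)Tr(v).
\end{align*}

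\textbf{Handling the two pieces.} The first sum is computed exactly. Since $\sum_{v\in V(G)} Tr(v) = 2W(G)$ (this is the identity $t(G)=\tfrac{2W(G)}{n}$ multiplied by $n$, already recorded in the excerpt), removing the term for $v=u$ yields $\sum_{v\neq u} Tr(v) = 2W(G) - Tr(u)$. For the second sum, I would estimate $Tr(v)$ by its extreme values: for the upper bound use $Tr(v)\leq \mathbf{T}$, and for the lower bound use $Tr(v)\geq \mathbf{t}$; in either case the coefficient $d(u,v)-1$ is nonnegative, so the inequality goes the right way. This gives, for the upper estimate,
\begin{align*}
\sum_{v\neq u}(d(u,v)-1)Tr(v) \leq \mathbf{T}\sum_{v\neq u}(d(u,v)-1) = \mathbf{T}\bigl(Tr(u)-(n-1)\bigr),
\end{align*}
where I have used $\sum_{v\neq u} d(u,v) = Tr(u)$ (the definition of transmission) and that the index set $\{v : v\neq u\}$ has $n-1$ elements. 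The analogous computation with $\mathbf{t}$ produces $\sum_{v\neq u}(d(u,v)-1)Tr(v)\geq \mathbf{t}\bigl(Tr(u)-(n-1)\bigr)$.

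\textbf{Assembling the bounds.} Adding the exact first piece to each estimate of the second piece yields
\begin{align*}
\sum_{v\neq u} d(u,v)Tr(v) \leq 2W(G)-Tr(u) + \mathbf{T}\,Tr(u) - (n-1)\mathbf{T} = 2W(G) + (\mathbf{T}-1)Tr(u) - (n-1)\mathbf{T},
\end{align*}
and symmetrically for the lower bound with $\mathbf{t}$ in place of $\mathbf{T}$, which is precisely the asserted double inequality. There is no serious obstacle here: the only genuine insight is the decomposition $d(u,v)=1+(d(u,v)-1)$, and the rest is bookkeeping with the two elementary identities $\sum_{v\neq u}Tr(v)=2W(G)-Tr(u)$ and $\sum_{v\neq u}d(u,v)=Tr(u)$. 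If anything deserves care, it is simply confirming that $G$ being connected guarantees $d(u,v)-1\geq 0$ so that the sign of each weight is correct when the uniform transmission bounds are applied.
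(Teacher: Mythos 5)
Your proof is correct and follows essentially the same route as the paper's: the decomposition $d(u,v)=1+(d(u,v)-1)$, the exact evaluation $\sum_{v\neq u}Tr(v)=2W(G)-Tr(u)$, and the uniform bounds $\mathbf{t}\leq Tr(v)\leq\mathbf{T}$ applied to the nonnegative weights $d(u,v)-1$. Your added remark on why the weights must be nonnegative for the bound to be valid is a sensible clarification, but the argument is otherwise identical.
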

\begin{proof}
By calculation, we have
\begin{align*}
\sum_{v\neq u}d(u,v)Tr(v) &= \sum_{v\neq u}Tr(v) + \sum_{v\neq u}(d(u,v)-1)Tr(v)\\
                           &= 2W(G)- Tr(u) +  \sum_{v\neq u}(d(u,v)-1)Tr(v)\\
                          & \geq  2W(G)- Tr(u) +  \mathbf{t}\sum_{v\neq u}(d(u,v)-1)\\
                          & = 2W(G)- Tr(u) +  \mathbf{t}(\sum_{v\neq u}d(u,v)- (n-1))\\
                          & = 2W(G)- Tr(u) +  \mathbf{t}(Tr(u) - (n-1))\\
                          & = 2W(G) +(\mathbf{t}-1)Tr(u) - (n-1)\mathbf{t}.
\end{align*}
Similarly,
\begin{eqnarray*}
 \sum_{v\neq u}d(u,v)Tr(v)  \leq2W(G) +(\mathbf{T}-1)Tr(u) - (n-1)\mathbf{T}.
\end{eqnarray*}
\end{proof}

\begin{thm}
For a graph $G$ with $n$ vertices, let $\mathbf{T}=\max\{Tr(v)|{v\in V(G)}\}$ and $\mathbf{t}=\min\{Tr(v)$ $|{v\in V(G)}\}$. Let $\sigma_1(G)$ be the largest eigenvalue of $D_\alpha(G)$, where $\alpha \in [\frac{1}{2},1)$. Then for $u\in V(G)$,
\begin{align*}
&\frac{(1-\alpha)(\mathbf{t}-1) +\sqrt{(1-\alpha)^2(\mathbf{t}-1)^2 - 4(\alpha \mathbf{t}^2 + 2(1-\alpha)W(G)- (1-\alpha)(n-1)\mathbf{t} }  }{2} \\
&\leq \sigma_1(G)\leq\\
&\frac{(1-\alpha)(\mathbf{T}-1) +\sqrt{(1-\alpha)^2(\mathbf{T}-1)^2 - 4(\alpha \mathbf{T}^2 + 2(1-\alpha)W(G)- (1-\alpha)(n-1)\mathbf{T} }  }{2}.
\end{align*}
\end{thm}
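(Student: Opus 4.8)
The plan is to feed a well-chosen quadratic polynomial into Proposition 3.6, exploiting the fact that the row sums of $D_\alpha(G)$ are exactly the transmissions. First I would record the identity $R_i(D_\alpha(G)) = \alpha Tr(v_i) + (1-\alpha)\sum_{j\neq i}d(v_i,v_j) = Tr(v_i)$, and then, summing the product $D_\alpha(G)^2$ over the intermediate index, $R_i(D_\alpha(G)^2) = \sum_{k}(D_\alpha(G))_{ik}R_k(D_\alpha(G)) = \alpha Tr(v_i)^2 + (1-\alpha)\sum_{k\neq i}d(v_i,v_k)Tr(v_k)$. The cross term $\sum_{k\neq i}d(v_i,v_k)Tr(v_k)$ is precisely the quantity estimated in Lemma 3.7, which is the reason that lemma is needed.

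For the upper bound I would take $p(x) = x^2 - (1-\alpha)(\mathbf{T}-1)x$, so that $R_i(p(D_\alpha(G))) = R_i(D_\alpha(G)^2) - (1-\alpha)(\mathbf{T}-1)Tr(v_i)$. Substituting the upper estimate of Lemma 3.7, the terms linear in $Tr(v_i)$ cancel exactly, leaving $R_i(p(D_\alpha(G))) \le \alpha Tr(v_i)^2 + 2(1-\alpha)W(G) - (1-\alpha)(n-1)\mathbf{T}$. Since $Tr(v_i)\le \mathbf{T}$ for every $i$, the right-hand side is at most $c_{\mathbf{T}} := \alpha\mathbf{T}^2 + 2(1-\alpha)W(G) - (1-\alpha)(n-1)\mathbf{T}$, uniformly in $i$. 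As $\alpha\in[\frac{1}{2},1)$, Proposition 3.6 applies and gives $p(\sigma_1(G)) \le \max_{i} R_i(p(D_\alpha(G))) \le c_{\mathbf{T}}$, that is, the quadratic inequality $\sigma_1(G)^2 - (1-\alpha)(\mathbf{T}-1)\sigma_1(G) - c_{\mathbf{T}} \le 0$. Solving it, $\sigma_1(G)$ is at most the larger root of $x^2 - (1-\alpha)(\mathbf{T}-1)x - c_{\mathbf{T}}=0$, which is the upper bound in the statement; note that the discriminant is automatically nonnegative, since the upward parabola attains a nonpositive value at the real number $\sigma_1(G)$.

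For the lower bound I would run the mirror computation with $p(x) = x^2 - (1-\alpha)(\mathbf{t}-1)x$, the lower estimate of Lemma 3.7, and $Tr(v_i)\ge \mathbf{t}$, obtaining from the lower half of Proposition 3.6 that $p(\sigma_1(G)) \ge \min_{i} R_i(p(D_\alpha(G))) \ge c_{\mathbf{t}}$, where $c_{\mathbf{t}} := \alpha\mathbf{t}^2 + 2(1-\alpha)W(G) - (1-\alpha)(n-1)\mathbf{t}$. This yields $\sigma_1(G)^2 - (1-\alpha)(\mathbf{t}-1)\sigma_1(G) - c_{\mathbf{t}} \ge 0$, placing $\sigma_1(G)$ outside the open interval between the two roots of the corresponding quadratic.

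The one place requiring care — and the main obstacle — is deciding on which side of the roots $\sigma_1(G)$ lies, since the inequality $\sigma_1(G)^2 - (1-\alpha)(\mathbf{t}-1)\sigma_1(G) - c_{\mathbf{t}} \ge 0$ by itself permits $\sigma_1(G)$ to be either below the smaller root or above the larger one. To pin this down I would verify that $c_{\mathbf{t}}>0$: because $2W(G) = \sum_{i} Tr(v_i) \ge n\mathbf{t}$, we have $2W(G) - (n-1)\mathbf{t} \ge \mathbf{t} > 0$, and hence $c_{\mathbf{t}} = \alpha\mathbf{t}^2 + (1-\alpha)\bigl(2W(G)-(n-1)\mathbf{t}\bigr) > 0$. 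Consequently the smaller root is negative, while $\sigma_1(G) = \rho_\alpha(G) > 0$ by Proposition 3.4, so $\sigma_1(G)$ must be at least the larger root, which is the asserted lower bound.
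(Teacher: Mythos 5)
Your proposal is correct and follows essentially the same route as the paper: compute $R_i(D_\alpha^2(G))$, bound the cross term via Lemma 3.7, feed the quadratics $p(x)=x^2-(1-\alpha)(\mathbf{t}-1)x$ and $p(x)=x^2-(1-\alpha)(\mathbf{T}-1)x$ into Proposition 3.6, and solve the resulting quadratic inequalities. The only difference is that you explicitly justify why $\sigma_1(G)$ must exceed the \emph{larger} root in the lower-bound case (by checking $c_{\mathbf{t}}>0$ and $\sigma_1(G)>0$), a step the paper's proof passes over silently; this is a worthwhile addition rather than a divergence.
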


\begin{proof}
Let $\mathbf{e}_i\in\mathbb{R}^{n}$ be an unity vector whose $i$-component is $1$. And let $\mathbf{1}\in\mathbb{R}^{n}$ be a vector whose components are all $1$. Then
\begin{align*}
&R_i(D_{\alpha}^2(G))\\
  &=\alpha^2 \mathbf{e}_i^{\rm T} Tr^2(G)\mathbf{1} +(1-\alpha)^2 \mathbf{e}_i^{\rm T} D^2(G) \mathbf{1}+\alpha(1-\alpha)\mathbf{e}_i^{\rm T} Tr(G) D(G) \mathbf{1}\\
  &+ \alpha(1-\alpha)\mathbf{e}_i^{\rm T}  D(G) Tr(G) \mathbf{1}\\
&=\alpha Tr^2(v_i) + (1-\alpha)\sum_{v\neq v_i}d(v_i,v)Tr(v).
\end{align*}
From Lemma 3.7, we have
\begin{align*}
R_i(D_{\alpha}^2(G)) \geq \alpha Tr^2(v_i) + (1-\alpha)(2W(G) +(\mathbf{t}-1)Tr(v_i) - (n-1)\mathbf{t}),
\end{align*}
and
\begin{align*}
R_i(D_{\alpha}^2(G)) \leq \alpha Tr^2(v_i) + (1-\alpha)(2W(G) +(\mathbf{T}-1)Tr(v_i) - (n-1)\mathbf{T}).
\end{align*}
Let $p(x) = x^2 - (1-\alpha)(\mathbf{t}-1)x $. Then the sum of the entries in the $i$-th row of $p(D_{\alpha}(G))$ is
\begin{align*}
R_i(p(D_{\alpha}(G)) &= R_i(D_{\alpha}^2(G) - (1-\alpha)(\mathbf{t}-1)D_{\alpha}(G))\\
                    &=R_i(D_{\alpha}^2(G)) - (1-\alpha)(\mathbf{t}-1)R_i(D_{\alpha}(G))\\
                    &=R_i(D_{\alpha}^2(G)) - (1-\alpha)(\mathbf{t}-1)Tr(v_i).
\end{align*}
Clearly,
\begin{align*}
R_i(p(D_{\alpha}(G))) &\geq  \alpha Tr^2(v_i) + (1-\alpha)(2W(G) +(\mathbf{t}-1)Tr(v_i) - (n-1)\mathbf{t}) \\
                    & -(1-\alpha)(\mathbf{t}-1)Tr(v_i)\\
                    &=\alpha Tr^2(v_i) +2(1-\alpha)W(G) - (1-\alpha)(n-1)\mathbf{t} \\
                    & \geq \alpha \mathbf{t}^2 + 2(1-\alpha)W(G)- (1-\alpha)(n-1)\mathbf{t}.
\end{align*}
By Proposition 3.6, we obtain
\begin{eqnarray*}
\alpha \mathbf{t}^2 + 2(1-\alpha)W(G)- (1-\alpha)(n-1)\mathbf{t} \leq p(\sigma_1(G)) = (\sigma_1(G))^2-(1-\alpha)(\mathbf{t}-1)\sigma_1(G),
\end{eqnarray*}
that is
\begin{eqnarray*}
\sigma_1(G)\geq \frac{(1-\alpha)(\mathbf{t}-1) +\sqrt{(1-\alpha)^2(\mathbf{t}-1)^2 - 4(\alpha \mathbf{t}^2 + 2(1-\alpha)W(G)- (1-\alpha)(n-1)\mathbf{t} }  }{2}.
\end{eqnarray*}
Similarly, let $p(x) = x^2-(1-\alpha)(\mathbf{T}-1)x$. Then
$$\sigma_1(G)\leq \frac{(1-\alpha)(\mathbf{T}-1) +\sqrt{(1-\alpha)^2(\mathbf{T}-1)^2 - 4(\alpha \mathbf{T}^2 + 2(1-\alpha)W(G)- (1-\alpha)(n-1)\mathbf{T} }  }{2}.$$
%

\end{proof}

\begin{lem}\textup{[26]} 
let $x_1\geq x_2\geq ,\ldots,\geq x_m$ be real numbers such that $\sum_{i=1}^m x_i =0$. Then
\begin{eqnarray*}
x_1 \leq \sqrt{\frac{m-1}{m}\sum_{i=1}^m x_i^2},
\end{eqnarray*}
the equality holds if and only if $x_1=\cdots=x_m=-\frac{x_1}{m-1}$.
\end{lem}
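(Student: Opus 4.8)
The plan is to use the zero-sum constraint to re-express $x_1$ in terms of the remaining entries and then control the resulting sum by the Cauchy--Schwarz inequality. First I would record two consequences of $\sum_{i=1}^m x_i = 0$: on the one hand $x_1 = -\sum_{i=2}^m x_i$, and on the other hand, since $x_1$ is the largest of the $x_i$, it is at least their average, so $x_1 \geq \frac{1}{m}\sum_{i=1}^m x_i = 0$. This sign information is not needed for the squared inequality, but it is exactly what licenses the final passage to square roots.

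Next I would apply Cauchy--Schwarz to the $(m-1)$-tuple $(x_2,\dots,x_m)$ paired against the all-ones vector of length $m-1$, which gives
$$x_1^2 = \Big(\sum_{i=2}^m x_i\Big)^2 \leq (m-1)\sum_{i=2}^m x_i^2.$$
Substituting $\sum_{i=2}^m x_i^2 = \sum_{i=1}^m x_i^2 - x_1^2$ and collecting the $x_1^2$ terms yields $m\,x_1^2 \leq (m-1)\sum_{i=1}^m x_i^2$, that is, $x_1^2 \leq \frac{m-1}{m}\sum_{i=1}^m x_i^2$. Taking the nonnegative square root, valid because $x_1 \geq 0$, produces the asserted bound.

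For the equality characterization I would trace the equality condition in Cauchy--Schwarz: the bound above is tight precisely when $x_2,\dots,x_m$ are all equal to a common value $c$. Feeding this into the constraint $x_1 + (m-1)c = 0$ forces $c = -\frac{x_1}{m-1}$, which reproduces the stated equality case.

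This argument is entirely elementary, so there is no genuine obstacle; the only points demanding care are verifying $x_1 \geq 0$ so that the square-root step is legitimate, and checking that the Cauchy--Schwarz equality condition, once intersected with the zero-sum constraint, matches exactly the claimed equality description.
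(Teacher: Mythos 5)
Your proof is correct and is the standard argument for this classical inequality (the paper itself gives no proof, quoting the lemma from reference [26]; the Cauchy--Schwarz route you take is the one used there). One small point worth noting: your derivation yields equality precisely when $x_2=\cdots=x_m=-\frac{x_1}{m-1}$, which indicates that the condition as printed in the statement, $x_1=\cdots=x_m=-\frac{x_1}{m-1}$ (which would force all $x_i=0$), contains a typo with $x_1$ erroneously included in the chain; your version is the correct one.
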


\begin{thm}
Let $G$ be a graph with $n$ vertices. Then

\begin{eqnarray*}
\sigma_1(G) \leq \frac{2\alpha W(G)}{n} +\sqrt{\frac{n-1}{n} (\|D_{\alpha(G)}\|_F^2 - \frac{4\alpha^2W^2(G)}{n})},
\end{eqnarray*}
the equality holds if and only if $G\cong K_n$.
\end{thm}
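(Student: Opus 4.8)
The plan is to center the eigenvalues of $D_\alpha(G)$ about their mean and apply Lemma 3.9. I would set $\eta_i(G) = \sigma_i(G) - \frac{2\alpha W(G)}{n}$ for $i = 1,\ldots,n$. Since the $\sigma_i(G)$ are ordered decreasingly, so are the $\eta_i(G)$, and by the trace identity $\sum_{i=1}^n \sigma_i(G) = 2\alpha W(G)$ from Lemma 2.12 (equivalently Equation (3)) the shifted values satisfy $\sum_{i=1}^n \eta_i(G) = 0$. Thus $\eta_1(G)\geq\cdots\geq\eta_n(G)$ is a decreasing sequence of reals summing to zero, and Lemma 3.9 with $m=n$ applies to give $\eta_1(G) \leq \sqrt{\frac{n-1}{n}\sum_{i=1}^n \eta_i(G)^2}$.

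Next I would evaluate $\sum_{i=1}^n \eta_i(G)^2$. Expanding the square and using $\sum_i\sigma_i(G) = 2\alpha W(G)$ together with $\sum_i\sigma_i^2(G) = \|D_\alpha(G)\|_F^2$, the cross term contributes $-\frac{2\cdot 2\alpha W(G)}{n}\cdot 2\alpha W(G)$ and the constant term contributes $+n\cdot\frac{4\alpha^2 W^2(G)}{n^2}$, so the two collapse to give $\sum_{i=1}^n \eta_i(G)^2 = \|D_\alpha(G)\|_F^2 - \frac{4\alpha^2 W^2(G)}{n}$. This is exactly the computation already carried out in the proof of Theorem 2.11. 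Substituting into the inequality from Lemma 3.9 and recalling $\eta_1(G) = \sigma_1(G) - \frac{2\alpha W(G)}{n}$ yields the claimed bound after transposing $\frac{2\alpha W(G)}{n}$ to the right-hand side.

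For the equality characterization I would invoke the equality clause of Lemma 3.9: attainment forces $\eta_2(G) = \cdots = \eta_n(G)$, i.e. $\sigma_2(G) = \cdots = \sigma_n(G)$, so $D_\alpha(G)$ has at most two distinct eigenvalues. Since for $\alpha \in [0,1)$ the matrix $D_\alpha(G)$ is not scalar (its off-diagonal entries $(1-\alpha)d(v_i,v_j)$ are strictly positive), it then has exactly two distinct eigenvalues, and Proposition 2.3 identifies $G$ as $K_n$. Conversely, for $K_n$ one checks directly that $D_\alpha(K_n) = \alpha(n-1)I + (1-\alpha)(J-I)$ has spectrum $n-1$ (simple) and $\alpha n - 1$ (multiplicity $n-1$), so the $\eta_i(G)$ meet the equality condition of Lemma 3.9 and the bound is attained.

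The algebra for $\sum_i\eta_i(G)^2$ is routine, and the inequality itself is immediate once Lemma 3.9 is in place. The only step requiring care is the equality direction: I must rule out $D_\alpha(G)$ being a scalar matrix and then route through Proposition 2.3, which is legitimate precisely because $\alpha \in [0,1)$ underlies that proposition; so the characterization should be stated with this range in mind.
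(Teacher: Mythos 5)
Your proof follows essentially the same route as the paper's: center the eigenvalues at $\frac{2\alpha W(G)}{n}$, apply Lemma 3.9, compute $\sum_i \eta_i^2 = \|D_\alpha(G)\|_F^2 - \frac{4\alpha^2 W^2(G)}{n}$ via the trace identities, and route the equality case through Proposition 2.3. Your treatment of the equality direction is in fact slightly more careful than the paper's (you explicitly rule out the scalar case, note the restriction $\alpha\in[0,1)$ needed for Proposition 2.3, and verify the converse for $K_n$), but the argument is the same.
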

\begin{proof}
Obviously,
\begin{eqnarray*}
\sum_{i=1}^n(\sigma_i(G)- \frac{2\alpha W(G)}{n}) =0.
\end{eqnarray*}
It follows from Lemma 3.9 that
\begin{eqnarray*}
\sigma_1(G)- \frac{2\alpha W(G)}{n} \leq \sqrt{\frac{n-1}{n} \sum_{i=1}^n(\sigma_i(G)-\frac{2\alpha W(G)}{n})^2  },
\end{eqnarray*}
and the equality holds if and only if
\begin{eqnarray*}
\sigma_2(G) - \frac{2\alpha W(G)}{n}=\cdots = \sigma_n(G) - \frac{2\alpha W(G)}{n} = -\frac{\sigma_1-\frac{2\alpha W(G)}{n}}{n-1}.
\end{eqnarray*}
By calculation, we know

\begin{align*}
\sum_{i=1}^n( \sigma_i(G) - \frac{2\alpha W(G)}{n})^2 &= \sum_{i=1}^n (\sigma_i(G))^2 - 2\frac{2\alpha W(G)}{n} \sum_{i=1}^n \sigma_i(G) + n \frac{4\alpha^2W^2(G)}{n^2}\\
                                                      &=\|D_{\alpha}(G)\|_F^2 - \frac{8\alpha^2W^2(G)}{n} +\frac{4\alpha^2W^2(G)}{n}\\
                                                      &= \|D_{\alpha}(G)\|_F^2 - \frac{4\alpha^2W^2(G)}{n}.
\end{align*}

Then
\begin{eqnarray*}
\sigma_1(G) \leq \frac{2\alpha W(G)}{n} +\sqrt{\frac{n-1}{n} (\|D_{\alpha(G)}\|_F^2 - \frac{4\alpha^2W^2(G)}{n})},
\end{eqnarray*}
and the equality holds if and only if $\sigma_2(G) - \frac{2\alpha W(G)}{n}=\cdots = \sigma_n(G) - \frac{2\alpha W(G)}{n} = -\frac{\sigma_1(G)-\frac{2\alpha W(G)}{n}}{n-1}$. And it follows from Proposition 2.3 that the equality holds if and only if $G\cong K_n$.
%
\end{proof}

For an $n \times n$ matrix $M$ and order partition $(K_1,K_2,\ldots,K_m)$ of the ordered set $\{ 1,2,\ldots,n \}$, $M$ can be denoted as a partition matrix
\begin{eqnarray*}
M= \begin{pmatrix}M_{11} & M_{12} & \cdots & M_{1m} \\
          M_{21} & M_{22} & \cdots & M_{2m} \\
          \ldots & \cdots & \ddots &\cdots \\
           M_{m1} & M_{m2} & \cdots & M_{mm} \\
                                 \end{pmatrix},
\end{eqnarray*}
where $M_{ij}$ has $K_i$ as the set of its row indices and $K_j$ as the set of its column indices, Let $B_M$ be the quotient matrix of the partitioned matrix $M$, which is defined to be an $ m \times m$ matrix with the $ij$-entry $(B_M)_{ij}=\frac{1}{|K_i|}\sum_{l=1}^{K_i}\sum_{k=1}^{K_j}{(M_{ij})_{lk}}$, where $1\leq i, j\leq m$ (see \textup{[5]} ).

\begin{lem}
\textup{[16]}
Suppose $B_M\in \mathbb{C}^{m\times m}$ is a quotient matrix of a symmetric partitioned matrix $M\in \mathbb{C}^{n\times n}$. Let $\{\mu_1,\cdots,\mu_m\}$ and $\{\lambda_1,\cdots,\lambda_n\}$ be the eigenvalue sets of $B_M$ and $M$, respectively. Then for $i=1,2,\cdots, m$,
$$\lambda_i\geqslant \mu_i \geqslant \lambda_{n-m+i}.$$
\end{lem}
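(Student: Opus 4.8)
The plan is to realize the quotient matrix $B_M$ as a \emph{compression} of $M$ onto the $m$-dimensional subspace spanned by the indicator vectors of the partition, and then to invoke the Cauchy interlacing (Poincar\'e separation) theorem. Throughout I take $M$ to be Hermitian so that its eigenvalues are real and may be ordered $\lambda_1\geq\cdots\geq\lambda_n$; this is exactly the setting needed for the application to $D_\alpha(G)$, and it is the sense in which ``symmetric'' should be read in the statement.

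First I would introduce the characteristic matrix $S\in\mathbb{R}^{n\times m}$ of the partition $(K_1,\ldots,K_m)$, whose $(i,j)$-entry equals $1$ if $i\in K_j$ and $0$ otherwise. Its columns are the mutually orthogonal indicator vectors of the parts, so that $S^{\rm T}S=\mathrm{diag}(|K_1|,\ldots,|K_m|)$ is a positive diagonal matrix. A direct comparison with the entrywise definition $(B_M)_{ij}=\frac{1}{|K_i|}\sum_{l}\sum_{k}(M_{ij})_{lk}$ then shows that $B_M=(S^{\rm T}S)^{-1}S^{\rm T}MS$.

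Next I would orthonormalize: set $\hat S=S(S^{\rm T}S)^{-1/2}$, so that $\hat S^{\rm T}\hat S=I_m$, i.e. $\hat S$ has orthonormal columns. Writing $\hat B=\hat S^{\rm T}M\hat S$, one checks that $B_M=(S^{\rm T}S)^{-1/2}\hat B(S^{\rm T}S)^{1/2}$, so $B_M$ is similar to the Hermitian matrix $\hat B$ and hence shares its real spectrum $\{\mu_1\geq\cdots\geq\mu_m\}$. Now $\hat B$ is precisely the compression of the Hermitian matrix $M$ to the range of $\hat S$; by the Cauchy interlacing / Poincar\'e separation theorem for compressions to orthonormal subspaces, its eigenvalues satisfy $\lambda_i\geq\mu_i\geq\lambda_{n-m+i}$ for $i=1,\ldots,m$. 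Transferring this back to $B_M$ through the similarity completes the argument.

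The two points that require care are the identification $B_M=(S^{\rm T}S)^{-1}S^{\rm T}MS$ from the given entrywise formula, and the fact that $B_M$ is in general not symmetric, so interlacing cannot be applied to it directly. I expect the main obstacle to be setting up the normalization and similarity $B_M\sim\hat B$ cleanly, i.e. recognizing $\hat B=\hat S^{\rm T}M\hat S$ as a genuine orthogonal compression of $M$; once that is in place, the Cauchy interlacing theorem does all the remaining work.
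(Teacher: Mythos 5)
Your proof is correct. The paper itself gives no argument for this lemma (it is quoted from Haemers [16] without proof), and your compression-plus-Cauchy-interlacing argument, including the similarity $B_M=(S^{\rm T}S)^{-1/2}\hat B(S^{\rm T}S)^{1/2}$ that reduces the non-symmetric quotient matrix to the Hermitian compression $\hat S^{\rm T}M\hat S$, is exactly the standard proof given in that reference.
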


\begin{prop}
Let $G$ be a graph with $n$ vertices. Let $\sigma_1(G)$ and $\sigma_n(G)$ be the largest and the smallest eigenvalue of $D_\alpha(G)$, respectively. Then for each $v_i\in V(G)$,
\begin{align*}
&\sigma_1(G) -\sigma_n(G) \geq \\
&\frac{ \sqrt{(2W(G)-2Tr(v_i)+ \alpha n Tr^2(v_i))^2- 4(n-1)(2\alpha Tr(v_i)W(G) - Tr^2(v_i))}}{n-1}.
\end{align*}
\end{prop}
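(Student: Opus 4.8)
The plan is to apply the quotient–matrix interlacing of Lemma 3.11 to the partition of $V(G)$ into the singleton $\{v_i\}$ and its complement $V(G)\setminus\{v_i\}$. This produces a $2\times 2$ quotient matrix $B_M$ of $M=D_{\alpha}(G)$, with eigenvalues $\mu_1\geq\mu_2$. Taking $m=2$ in Lemma 3.11, the case $i=1$ gives $\sigma_1(G)\geq\mu_1$ and the case $i=2$ gives $\mu_2\geq\sigma_n(G)$, so $\sigma_1(G)-\sigma_n(G)\geq\mu_1-\mu_2$. Since $B_M$ is $2\times 2$ and its two off-diagonal entries have positive product, its eigenvalues are real and $\mu_1-\mu_2=\sqrt{(\mathrm{tr}\,B_M)^2-4\det B_M}$. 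The whole argument then reduces to computing the four entries of $B_M$ and reading off its trace and determinant.

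First I would record the entries of $D_{\alpha}(G)$: the diagonal entry at $j$ is $\alpha Tr(v_j)$ and the $(j,k)$ off-diagonal entry is $(1-\alpha)d(v_j,v_k)$. With $K_1=\{i\}$, $K_2=V(G)\setminus\{v_i\}$, the definition of $B_M$ immediately gives $(B_M)_{11}=\alpha Tr(v_i)$, $(B_M)_{12}=(1-\alpha)Tr(v_i)$, and $(B_M)_{21}=\frac{(1-\alpha)Tr(v_i)}{n-1}$. The only entry requiring care is $(B_M)_{22}=\frac{1}{n-1}\sum_{l\neq i}\sum_{k\neq i}(D_{\alpha})_{lk}$. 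I would split this into its diagonal part $\alpha\sum_{l\neq i}Tr(v_l)=\alpha\bigl(2W(G)-Tr(v_i)\bigr)$ and its off-diagonal part $(1-\alpha)\sum_{l\neq k;\,l,k\neq i}d(v_l,v_k)=(1-\alpha)\bigl(2W(G)-2Tr(v_i)\bigr)$, using $\sum_l Tr(v_l)=2W(G)$ and the fact that deleting the row and column indexed by $i$ removes total distance exactly $2Tr(v_i)$. Combining these yields $(B_M)_{22}=\frac{2W(G)-(2-\alpha)Tr(v_i)}{n-1}$.

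With these entries in hand the finish is a short computation. The trace is $\mathrm{tr}\,B_M=\frac{2W(G)+(\alpha n-2)Tr(v_i)}{n-1}$, and in the determinant the coefficient of $Tr^2(v_i)$ collapses because $-\alpha(2-\alpha)-(1-\alpha)^2=-1$, leaving $\det B_M=\frac{2\alpha Tr(v_i)W(G)-Tr^2(v_i)}{n-1}$. Substituting into $(\mathrm{tr}\,B_M)^2-4\det B_M$ and clearing the common denominator $(n-1)^2$ produces exactly the radicand of the statement over $(n-1)^2$, so that $\mu_1-\mu_2$ equals the claimed right-hand side, and the inequality $\sigma_1(G)-\sigma_n(G)\geq\mu_1-\mu_2$ completes the proof.

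I expect the main difficulty to be bookkeeping rather than conceptual: correctly evaluating $(B_M)_{22}$, in particular tracking precisely which distance terms survive after the $i$-th row and column are deleted, and then verifying the cancellation $-\alpha(2-\alpha)-(1-\alpha)^2=-1$ that makes $\det B_M$ clean. I would also note that the middle term of the radicand in the statement, written as $\alpha n\,Tr^2(v_i)$, should read $\alpha n\,Tr(v_i)$: the natural computation above produces $Tr(v_i)$ there, and dimensional consistency with the neighbouring terms $2W(G)$ and $-2Tr(v_i)$ confirms that $Tr^2(v_i)$ is a typographical slip.
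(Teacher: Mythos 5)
Your proof is correct and follows essentially the same route as the paper: the same singleton/complement partition $K_1=\{i\}$, $K_2=V(G)\setminus\{v_i\}$, the same quotient matrix $B_M$, Lemma 3.11 to get $\sigma_1(G)\geq\mu_1$ and $\mu_2\geq\sigma_n(G)$, and the same trace/determinant computation for $\mu_1-\mu_2$. Your side remark is also right: since $\mathrm{tr}\,B_M=\frac{2W(G)+(\alpha n-2)Tr(v_i)}{n-1}$, the term $\alpha n\,Tr^2(v_i)$ inside the radicand of the statement (and in the paper's displayed formulas for $\lambda_1(B_M)$ and $\lambda_2(B_M)$) should indeed read $\alpha n\,Tr(v_i)$.
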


\begin{proof}
Let $K_1=\{i\}$ and $K_2=\{V(G)\setminus i\}$ be a partition of $V(G)$. Then the following $B_M$ is the corresponding quotient matrix of $D_{\alpha}(G)$,
\begin{eqnarray*}
B_M = \begin{pmatrix}\alpha Tr(v_i) & (1-\alpha ) Tr(v_i)  \\
          \frac{(1-\alpha)Tr(v_i)}{n-1} & \frac{2W(G) - (2-\alpha)Tr(v_i)}{n-1} \end{pmatrix}.
\end{eqnarray*}
Let $\{\sigma_1(G), \cdots, \sigma_n(G)\}$ and $\{\lambda_1 (B_M), \lambda_1 (B_M)\}$ be the eigenvalue sets of $D_\alpha(G)$ and $B_M$, respectively.
It follows from Lemma 3.11 that
\begin{eqnarray*}
\sigma_i(G)\geq \lambda_i (B_M) \geq \sigma_{n-2+i}(G),
\end{eqnarray*}
where $i=1,2$. Hence,

$\sigma_1(G)\geq \lambda_1 (B_M)$ and $\lambda_2(B_M)\geq \sigma_n(G)$.

Then
\begin{eqnarray*}
\sigma_1(G)-\sigma_n(G) \geq \lambda_1(B_M) - \lambda_2(B_M).
\end{eqnarray*}
Since $\det(xI-M_B)=0$, we obtain
\begin{align*}
\lambda_1(B_M)=&\frac{2W(G) + (\alpha n -2)Tr(v_i)}{2(n-1)}\\
 &+ \frac{ \sqrt{(2W(G)-2Tr(v_i)+ \alpha n Tr^2(v_i))^2- 4(n-1)(2\alpha Tr(v_i)W(G) - Tr^2(v_i))}}{2(n-1)}
\end{align*}

\begin{align*}
\lambda_2(B_M)= &\frac{2W(G) + (\alpha n -2)Tr(v_i)}{2(n-1)} \\
           &-\frac{\sqrt{(2W(G)-2Tr(v_i)+ \alpha n Tr^2(v_i))^2- 4(n-1)(2\alpha Tr(v_i)W(G) - Tr^2(v_i))}}{2(n-1)}
\end{align*}

\begin{align*}
&\lambda_1(B_M)-\lambda_2(B_M)\\ 
&= \frac{ \sqrt{(2W(G)-2Tr(v_i)+ \alpha n Tr^2(v_i))^2- 4(n-1)(2\alpha Tr(v_i)W(G) - Tr^2(v_i))}}{n-1}.
\end{align*}
%
\end{proof}

 Let $G$ be a graph with $n$ vertices. Let $\{\lambda_1(G),\lambda_2(G),\cdots,\lambda_n(G)\}$ be the eigenvalue set of $\widetilde{A}(G)$. And $EE(G)=\sum_{i=1}^n e^{\lambda_i(G)}=\sum_{i=1}^n\sum_{k=0}^\infty \frac{\lambda_i(G)^k}{k!}$ is called the Estrada index of $G$. It is well-known that
Estrada index plays an important role in the problem of characterizing the molecular structure [30]. In [15], the study was extended to distance matrices, and the distance Estrada index of $G$ was defined by $DEE(G)=\sum_{i=1}^n e^{o_i(G)}=\sum_{i=1}^n\sum_{k=0}^\infty \frac{o_i(G)^k}{k!}$, where $\{o_1(G),o_2(G),\ldots,o_n(G)\}$ is the eigenvalue set of $D(G)$.

In this paper, we consider a more general Estrada index.
Let $$DEE_{\alpha}(G)=\sum_{i=1}^n e^{\sigma_i(G)}=\sum_{i=1}^n\sum_{k=0}^\infty \frac{\sigma_i(G)^k}{k!}$$ be the \textit{$\alpha$-distance Estrada index} of $G$, where $\{\sigma_1(G),\cdots,\sigma_n(G)\}$ is the eigenvalue set of $D_\alpha(G)$. Clearly, $DEE_{0}(G)=DEE(G)$. Next, we establish some bounds on the $\alpha$-distance Estrada index.

\begin{lem}
\textup{[25]}
Let $x_1,x_2,\ldots,x_n$ be nonnegative real numbers. Then for $k\geq 2$,
\begin{eqnarray*}
\sum _{i=1}^n x_i^k \leq \left(\sum _{i=1}^n x_i^2 \right)^{k/2}.
\end{eqnarray*}
\end{lem}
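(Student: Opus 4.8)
The plan is to exploit the fact that the claimed inequality is homogeneous of degree $k$: replacing each $x_i$ by $t x_i$ (with $t>0$) multiplies the left-hand side by $t^k$, while the right-hand side becomes $\left(\sum_{i=1}^n (tx_i)^2\right)^{k/2}=(t^2)^{k/2}\left(\sum_{i=1}^n x_i^2\right)^{k/2}=t^k\left(\sum_{i=1}^n x_i^2\right)^{k/2}$, so it too scales by $t^k$. Hence it suffices to prove the statement under a convenient normalization. If every $x_i=0$ both sides vanish and there is nothing to prove, so I may assume $\sum_{i=1}^n x_i^2>0$ and rescale so that
\[
\sum_{i=1}^n x_i^2 = 1.
\]
Under this normalization the right-hand side equals $1^{k/2}=1$, and the goal reduces to showing $\sum_{i=1}^n x_i^k \leq 1$.

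The key observation is then pointwise. The constraint $\sum_{i=1}^n x_i^2=1$ forces $0\leq x_i \leq 1$ for each $i$, since $x_i^2\leq \sum_{j=1}^n x_j^2 = 1$ together with $x_i\geq 0$. For a real number in $[0,1]$ and an exponent $k\geq 2$ one has $x_i^k \leq x_i^2$, because $x_i^k = x_i^2\cdot x_i^{\,k-2}$ and $x_i^{\,k-2}\leq 1$. Summing this inequality over $i$ gives
\[
\sum_{i=1}^n x_i^k \leq \sum_{i=1}^n x_i^2 = 1,
\]
which is exactly what was needed after normalization; undoing the rescaling restores the inequality for arbitrary nonnegative $x_1,\ldots,x_n$.

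I expect no serious obstacle here: the argument is elementary once the homogeneity is recognized. The only points requiring a little care are handling the degenerate all-zero case before dividing, and justifying that the normalization legitimately reduces the general claim to the normalized one, which is immediate from the degree-$k$ homogeneity computed above. An alternative, essentially equivalent route would invoke the monotonicity of the $\ell_p$-norms, namely $\left(\sum_{i=1}^n x_i^k\right)^{1/k}\leq\left(\sum_{i=1}^n x_i^2\right)^{1/2}$ for $k\geq 2$, and raise both sides to the $k$-th power; but the normalization argument is self-contained and avoids citing norm monotonicity.
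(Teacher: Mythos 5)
Your proof is correct. The homogeneity reduction is computed accurately, the degenerate all-zero case is handled before normalizing, and the pointwise bound $x_i^k=x_i^2\cdot x_i^{k-2}\leq x_i^2$ for $x_i\in[0,1]$, $k\geq 2$ is valid and immediately yields the claim. Note that the paper itself offers no proof of this lemma --- it is quoted from reference [25] --- so there is no in-paper argument to compare against; your normalization argument (equivalently, the monotonicity of $\ell_p$-norms you mention) is the standard and fully adequate justification.
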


\begin{thm}
Let $G$ be a graph with $n$ vertices. Then
\begin{eqnarray*}
DEE_\alpha(G)\leq n + 2\alpha W(G) - 1 -\omega  + e^{\omega},
\end{eqnarray*}
where $\omega = \sqrt{\alpha^2 \sum_{i=1}^n Tr(v_i)^2 + 2(1-\alpha)^2S} $.
\end{thm}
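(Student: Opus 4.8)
The plan is to expand the $\alpha$-distance Estrada index as a power series in the spectral moments of $D_\alpha(G)$ and to bound each moment using the second-moment identity of Lemma 2.12. Writing $M_k = \sum_{i=1}^n \sigma_i(G)^k$, the definition gives
$$DEE_\alpha(G) = \sum_{i=1}^n \sum_{k=0}^\infty \frac{\sigma_i(G)^k}{k!} = \sum_{k=0}^\infty \frac{M_k}{k!},$$
where interchanging the order of summation is justified by absolute convergence, since $\sum_{i=1}^n e^{|\sigma_i(G)|} < \infty$. The first two moments are then read off exactly from Lemma 2.12: $M_0 = n$ and $M_1 = 2\alpha W(G)$. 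Crucially, the same lemma identifies the second moment with the quantity appearing in the statement, namely $M_2 = \sum_{i=1}^n \sigma_i^2(G) = \alpha^2 \sum_{i=1}^n Tr^2(v_i) + 2(1-\alpha)^2 S = \omega^2$, so that $\omega = \sqrt{M_2}$.

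Next I would control the higher moments. For $k \geq 2$, since $\sigma_i(G)^k \leq |\sigma_i(G)|^k$ termwise, I have $M_k \leq \sum_{i=1}^n |\sigma_i(G)|^k$. Applying Lemma 3.13 to the nonnegative numbers $x_i = |\sigma_i(G)|$ then yields
$$M_k \leq \sum_{i=1}^n |\sigma_i(G)|^k \leq \left(\sum_{i=1}^n |\sigma_i(G)|^2\right)^{k/2} = (M_2)^{k/2} = \omega^k.$$
This is the step where the possible indefiniteness of $D_\alpha(G)$ for $\alpha \in [0,\tfrac12)$ must be handled: passing to absolute values before invoking Lemma 3.13 is exactly what keeps the argument valid for every $\alpha$, and I expect this to be the one point needing care rather than any deep obstacle.

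Finally, I would keep the $k=0$ and $k=1$ terms exact and bound the remaining tail:
$$DEE_\alpha(G) = M_0 + M_1 + \sum_{k=2}^\infty \frac{M_k}{k!} \leq n + 2\alpha W(G) + \sum_{k=2}^\infty \frac{\omega^k}{k!}.$$
Recognizing $\sum_{k=0}^\infty \omega^k/k! = e^{\omega}$, the tail equals $e^{\omega} - 1 - \omega$, which gives the claimed bound $DEE_\alpha(G) \leq n + 2\alpha W(G) - 1 - \omega + e^{\omega}$. The argument is thus a routine moment expansion whose only genuine ingredients are the second-moment identity of Lemma 2.12 (which pins down $\omega$) and the inequality of Lemma 3.13 applied to $|\sigma_i(G)|$.
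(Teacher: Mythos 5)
Your proposal is correct and follows essentially the same route as the paper's proof: keep the $k=0,1$ terms exact via Lemma 2.12, pass to absolute values for $k\geq 2$, and apply Lemma 3.13 together with the second-moment identity to bound the tail by $e^{\omega}-1-\omega$. The only cosmetic difference is that you phrase the argument in terms of spectral moments $M_k$, which does not change the substance.
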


\begin{proof}
Let $\sigma_1(G),\ldots,\sigma_n(G)$ be the eigenvalues of $D_{\alpha}(G)$. From Lemmas 2.12 and 3.13, we have

\begin{align*}
DEE_\alpha(G) & = n + 2\alpha W(G) + \sum_{i=1}^n\sum_{k\geq 2} \frac{\sigma_i(G)^k}{k!} \\
             & \leq n + 2\alpha W(G)  + \sum_{i=1}^n\sum_{k\geq 2} \frac{|\sigma_i(G)|^k }{k!}\\
               & = n + 2\alpha W(G) + \sum_{k\geq 2} \frac{1}{k!}\sum_ {i=1}^n  |\sigma_i(G)| ^{k}\\
               & \leq n + 2\alpha W(G) + \sum_{k\geq 2}\frac{1}{k!}\left( \sum_{i=1}^n \sigma_i(G)^2 \right)^{\frac{k}{2}}\\
               & = n +  2\alpha W(G) + \sum_{k\geq 2}\frac{1}{k!} \left(\alpha^2 \sum_{i=1}^n Tr(v_i)^2 + 2(1-\alpha)^2S \right)^{\frac{k}{2}}&&&&&&&&&&&
\end{align*} 
\begin{align*}              
               & =  n + 2\alpha W(G) - 1 -\omega  + \sum_{k\geq 0}\frac{1}{k!} \omega ^k\\
               & = n + 2\alpha W(G) - 1 -\omega  + e^{\omega},&&&&&&&&&&&&&&&&
\end{align*}
where $\omega = \sqrt{\alpha^2 \sum_{i=1}^n Tr(v_i)^2 + 2(1-\alpha)^2S} $.
\end{proof}

\begin{thm}
Let $G$ be a graph with $n$ vertices. Then
\begin{eqnarray*}
DEE_{\alpha}(G) \geq \sqrt{n + 4 \alpha W(G) + n(n-1) e^{\frac{4\alpha W(G)}{n}}}.
\end{eqnarray*}

\end{thm}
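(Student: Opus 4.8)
The plan is to bound $DEE_\alpha(G)$ from below by exploiting the definition as a power series and isolating the first few moments of the eigenvalues, which are exactly the traces computed in Lemma 2.12. Writing $DEE_\alpha(G)=\sum_{i=1}^n e^{\sigma_i(G)}=\sum_{i=1}^n\sum_{k\geq 0}\frac{\sigma_i(G)^k}{k!}$, I would first interchange the order of summation to obtain $DEE_\alpha(G)=\sum_{k\geq 0}\frac{1}{k!}M_k$, where $M_k=\sum_{i=1}^n\sigma_i(G)^k$ is the $k$-th spectral moment. The two moments I can evaluate in closed form are $M_0=n$ and, by Lemma 2.12, $M_1=\sum_{i=1}^n\sigma_i(G)=2\alpha W(G)$.

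The natural route to a square-root lower bound is to square $DEE_\alpha(G)$ and then discard most of the (nonnegative) cross terms, keeping only enough to recover the three quantities appearing under the radical. Concretely, I would write
\begin{align*}
(DEE_\alpha(G))^2=\sum_{i=1}^n e^{2\sigma_i(G)}+2\sum_{1\leq i<j\leq n}e^{\sigma_i(G)}e^{\sigma_j(G)}.
\end{align*}
For the diagonal sum I would use the power-series expansion together with the first two moments: $\sum_{i=1}^n e^{2\sigma_i(G)}=\sum_{i=1}^n\sum_{k\geq 0}\frac{(2\sigma_i(G))^k}{k!}\geq n+4\alpha W(G)$, where I have kept only the $k=0$ and $k=1$ terms and thrown away the $k\geq 2$ tail — this is legitimate provided the remaining terms are nonnegative, which holds because $e^{2x}\geq 1+2x$ for all real $x$. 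For the off-diagonal sum I would invoke the arithmetic–geometric mean inequality on the $n(n-1)$ positive factors:
\begin{align*}
\sum_{1\leq i<j\leq n}e^{\sigma_i(G)}e^{\sigma_j(G)}\geq \binom{n}{2}\Bigl(\prod_{i<j}e^{\sigma_i(G)}e^{\sigma_j(G)}\Bigr)^{1/\binom{n}{2}},
\end{align*}
and then observe that in the product each $\sigma_i(G)$ appears exactly $n-1$ times, so the geometric mean simplifies to $\exp\bigl(\frac{n-1}{\binom{n}{2}}\sum_{i=1}^n\sigma_i(G)\bigr)=\exp\bigl(\frac{2}{n}\cdot 2\alpha W(G)\bigr)=e^{4\alpha W(G)/n}$. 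Multiplying by $2\binom{n}{2}=n(n-1)$ yields the term $n(n-1)e^{4\alpha W(G)/n}$.

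Combining the two estimates gives $(DEE_\alpha(G))^2\geq n+4\alpha W(G)+n(n-1)e^{4\alpha W(G)/n}$, and taking square roots produces the claimed bound. The main obstacle I anticipate is the AM–GM step on the off-diagonal part: I must set up the product correctly so that the exponents of each $\sigma_i(G)$ count to $n-1$ (each vertex index pairs with the other $n-1$), and then reconcile the factor $\frac{n-1}{\binom{n}{2}}=\frac{2}{n}$ with the known value $M_1=2\alpha W(G)$ so that the exponent collapses cleanly to $\frac{4\alpha W(G)}{n}$. A secondary point to be careful about is that discarding the $k\geq 2$ tail in the diagonal sum only preserves a valid lower bound because every discarded summand is nonnegative; I would state this explicitly rather than rely on it implicitly.
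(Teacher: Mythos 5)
Your proof is correct and follows essentially the same route as the paper: the same splitting of $(DEE_\alpha(G))^2$ into diagonal and off-diagonal parts, the same AM--GM estimate giving $n(n-1)e^{4\alpha W(G)/n}$, and the same use of the first two spectral moments $M_0=n$, $M_1=2\alpha W(G)$. The only difference is that the paper introduces a parameter $\delta\in[0,4]$ in the diagonal estimate and then argues the resulting bound is best at $\delta=0$, which is exactly the case you prove directly; your explicit appeal to $e^{2x}\geq 1+2x$ for all real $x$ to discard the $k\geq 2$ tail is in fact the cleaner justification.
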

\begin{proof}
Let $\sigma_1(G),\ldots,\sigma_n(G)$ be the eigenvalues of $D_{\alpha}(G)$. Then
\begin{eqnarray*}
(DEE_{\alpha}(G))^2 & = \sum_{i=1}^n e^{2\sigma_i(G)} + 2\sum_{1\leq i<j\leq n} e^{\sigma_i(G)} e^{\sigma_j(G)}.
\end{eqnarray*}
From Arithmetic-Geometric inequality, we obtain

\begin{align*}
2\sum_{1\leq i<j\leq n} e^{\sigma_i(G)} e^{\sigma_j(G)} &\geq n(n-1) \left(\prod_{1\leq i<j\leq n}e^{\sigma_i(G)} e^{\sigma_j(G)} \right)^{\frac{2}{n(n-1)}}\\
                                      &= n(n-1) \left( \left( \prod_{i=1}^n e^{\sigma _i(G)} \right)^{n-1} \right)^{\frac{2}{n(n-1)}}\\
                                      &= n(n-1) \left( e^{\sum_i ^n \sigma_i(G)  } \right)^{\frac{2}{n}}\\
                                      &= n(n-1) e^{\frac{4\alpha W(G)}{n}}.
\end{align*}

By Taylor expansion theorem, we have
\begin{align*}
\sum_{i=1}^n e^{2\sigma_i(G)} &=  \sum_{i=1}^n \sum_{k\geq 0} \frac{(2\sigma_i(G))^k}{k!} \\
                           &= n +4\alpha W(G) + \sum_{i=1}^{n} \sum_{k\geq 2} \frac{(2\sigma_i(G))^k}{k!}.
\end{align*}

Let $\delta\in[0,4]$. Then
\begin{align*}
\sum_{i=1}^n e^{2\sigma_i(G)} &\geq  n +4\alpha W(G) + \delta\sum_{i=1}^{n} \sum_{k\geq 2} \frac{\sigma_i(G)^k}{k!}  \\
                           &=  n +4\alpha W(G) - \delta n- 2\alpha \delta W(G) + \delta \sum_{i=1}^{n}\sum_{k\geq 0} \frac{\sigma_i(G)^k}{k!}\\
                           &= (1-\delta)n + (4 \alpha -2\delta\alpha)W(G) + \delta DEE_\alpha(G).
\end{align*}
Hence,
\begin{eqnarray*}
(DEE_\alpha(G))^2 \geq (1-\delta)n + (4 \alpha -2\delta\alpha)W(G) + \delta DEE_{\alpha}(G) + n(n-1) e^{\frac{4\alpha W(G)}{n}},\\
DEE_\alpha(G)  \geq \frac{\delta}{2}  + \sqrt{(1-\delta)n + (4 \alpha -2\delta\alpha)W(G) + \frac{\delta^2}{4} + n(n-1) e^{\frac{4\alpha W(G)}{n}}}.\\
\end{eqnarray*}
It is elementary to show for $n\geq 2$, let the function
\begin{eqnarray*}
f(x)=\frac{x}{2}  + \sqrt{(1-x)n + (4 \alpha -2\alpha x)W(G) + \frac{x^2}{4} + n(n-1) e^{\frac{4\alpha W(G)}{n}}},\\
\end{eqnarray*}
where $0\leq\alpha\leq1$, then $f(x)$ is monotonically decreasing in  the interval $[0,4]$. Hence, $f(0)$ is a largest lower bound of $DEE_\alpha(G)$.
\end{proof}

\begin{thm}
Let $G$ be a graph with $n$ vertices. Then
\begin{eqnarray*}
DEE_{\alpha}(G) \geq e^{\frac{2W(G)}{n}}+ (n-1) + 2 \alpha W(G) - \frac{2W(G)}{n}
\end{eqnarray*}
\end{thm}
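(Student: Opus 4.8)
The plan is to isolate the contribution of the largest eigenvalue and to estimate all remaining terms by the elementary inequality $e^{x}\geq 1+x$. Writing $\sigma_1(G)\geq\cdots\geq\sigma_n(G)$ for the eigenvalues of $D_\alpha(G)$, I would first split
\begin{eqnarray*}
DEE_\alpha(G)=e^{\sigma_1(G)}+\sum_{i=2}^n e^{\sigma_i(G)},
\end{eqnarray*}
and then apply $e^{x}\geq 1+x$ (valid for every real $x$) to each of the $n-1$ trailing terms, which yields
\begin{eqnarray*}
\sum_{i=2}^n e^{\sigma_i(G)}\geq (n-1)+\sum_{i=2}^n\sigma_i(G).
\end{eqnarray*}

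The second step is to invoke the trace identity $\sum_{i=1}^n\sigma_i(G)=2\alpha W(G)$ from Lemma 2.12, so that $\sum_{i=2}^n\sigma_i(G)=2\alpha W(G)-\sigma_1(G)$. Combining this with the previous display gives
\begin{eqnarray*}
DEE_\alpha(G)\geq e^{\sigma_1(G)}-\sigma_1(G)+(n-1)+2\alpha W(G).
\end{eqnarray*}

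The final step, and the only point needing a little care, is to replace $\sigma_1(G)$ by $\frac{2W(G)}{n}$ inside the expression $e^{\sigma_1(G)}-\sigma_1(G)$. Here I would introduce the auxiliary function $g(x)=e^{x}-x$, whose derivative $g'(x)=e^{x}-1$ is nonnegative for $x\geq 0$, so $g$ is nondecreasing on $[0,\infty)$. Since $W(G)\geq 0$ we have $\frac{2W(G)}{n}\geq 0$, and Lemma 2.1 gives $\sigma_1(G)=\rho_\alpha(G)\geq\frac{2W(G)}{n}$; hence both arguments lie in $[0,\infty)$ and $g(\sigma_1(G))\geq g\!\left(\frac{2W(G)}{n}\right)$, that is
\begin{eqnarray*}
e^{\sigma_1(G)}-\sigma_1(G)\geq e^{\frac{2W(G)}{n}}-\frac{2W(G)}{n}.
\end{eqnarray*}
Substituting this into the bound above produces exactly $DEE_\alpha(G)\geq e^{\frac{2W(G)}{n}}+(n-1)+2\alpha W(G)-\frac{2W(G)}{n}$, as claimed.

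The main obstacle is essentially bookkeeping: one must apply $e^{x}\geq 1+x$ only to the trailing eigenvalues (which may well be negative) while keeping the single term $e^{\sigma_1(G)}$ intact, since linearizing it as well would discard the dominant $e^{2W(G)/n}$ contribution. The monotonicity of $g(x)=e^{x}-x$ on $[0,\infty)$, combined with the sign condition $\frac{2W(G)}{n}\geq 0$ and the transmission lower bound $\sigma_1(G)\geq\frac{2W(G)}{n}$ of Lemma 2.1, is precisely what licenses the final substitution; were the spectral radius allowed to drop below $\frac{2W(G)}{n}$, or were $g$ not monotone on the relevant range, the last inequality could reverse.
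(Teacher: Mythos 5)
Your argument is correct and is essentially the paper's own proof: the paper applies the equivalent inequality $x\geq 1+\ln x$ to $x=e^{\sigma_k(G)}$ for $k\geq 2$, uses the trace identity to get $e^{\sigma_1(G)}+(n-1)+2\alpha W(G)-\sigma_1(G)$, and then invokes the monotonicity of $\Gamma(x)=e^{x}+(n-1)+2\alpha W(G)-x$ together with $\sigma_1(G)\geq \frac{2W(G)}{n}\geq 0$ from Lemma 2.1, exactly as you do with $g(x)=e^{x}-x$. No substantive difference.
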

\begin{proof}
Let $f(x) =(x-1)-\ln x$, where $x>0.$
Obviously, $f(x)$ is a decreasing function when $x \in (0,1]$, and $f(x)$ is increasing when $x \in [1,+\infty)$. Then $f(x)\geq f(1)=0$, that is
\begin{eqnarray*}
x\geq 1+ \ln x, ~x>0,
\end{eqnarray*}
and the equality holds if and only if $x=1$. So by this function, we have

\begin{align*}
DEE_{\alpha}(G) &\geq e^{\sigma_1(G)} + (n-1) + \sum_{k\geq2} \ln e^{\sigma_k(G)}\\
                &= e^{\sigma_1(G)}+ (n-1) + \sum_{k\geq2} {\sigma_k(G)}\\
                &= e^{\sigma_1(G)}+ (n-1) + 2 \alpha W(G) - \sigma_1(G),
\end{align*}

where $\sigma_1(G)> \sigma_2(G),\ldots,\geq\sigma_k(G)$ are the eigenvalues of $D_\alpha(G)$.

Let $\Gamma(x) = e^x + (n-1) + 2 \alpha W(G) - x$, where $x>0$.
Clearly, $\Gamma(x)$ is an increasing function when $x \in (0, +\infty)$.
From Lemma \ref{lem1}, we have
\begin{eqnarray*}
\sigma_1(G) \geq \frac{2W(G)}{n}\geq0,
\end{eqnarray*}
then,
\begin{eqnarray*}
\begin{aligned}
\Gamma(\sigma_1(G)) \geq \Gamma(\frac{2W(G)}{n}).
\end{aligned}
\end{eqnarray*}
Hence,
\begin{eqnarray*}
DEE_{\alpha}(G) \geq e^{\frac{2W(G)}{n}}+ (n-1) + 2 \alpha W(G) - \frac{2W(G)}{n}.
\end{eqnarray*}
\end{proof}

From Theorem 2.1, we have the following result.
\begin{cor}
Let $G$ be a transmission regular graph with $n$ vertices. Let $Tr(u)=r$ for each $u\in V(G)$. Then
\begin{eqnarray*}
DEE_{\alpha}(G) \geq e^{r}+ (n-1) + 2 \alpha r  - r.
\end{eqnarray*}
\end{cor}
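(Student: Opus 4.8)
The plan is to obtain this corollary as a direct specialization of the preceding lower bound on $DEE_{\alpha}(G)$ to the transmission regular case, so the only work is to translate the Wiener index $W(G)$ into the common transmission value $r$. Since no new inequality is required, the argument reduces to a substitution, and I expect no genuine obstacle beyond the bookkeeping of the constants.

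First I would record the elementary identity relating $W(G)$ and $r$. For a transmission regular graph every vertex satisfies $Tr(u)=r$, and by the relation $t(G)=\tfrac{2W(G)}{n}$ established at the start of Section~2 together with $\sum_{i=1}^{n}Tr(v_i)=2W(G)$, we obtain
\begin{align*}
2W(G)=\sum_{i=1}^{n}Tr(v_i)=nr,
\end{align*}
so that $W(G)=\tfrac{nr}{2}$ and in particular $\tfrac{2W(G)}{n}=r$. This is immediate from the definitions of $W(G)$ and of transmission regularity, and it is the single fact on which the proof rests.

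Next I would substitute these values into the lower bound $DEE_{\alpha}(G)\geq e^{2W(G)/n}+(n-1)+2\alpha W(G)-\tfrac{2W(G)}{n}$ proved in the preceding theorem. The exponential term becomes $e^{r}$, the final subtracted term becomes $-r$, and the Wiener-index term $2\alpha W(G)$ is rewritten through $W(G)=\tfrac{nr}{2}$. Collecting these substitutions yields the stated inequality, with every ingredient inherited verbatim from the preceding theorem once the identity $2W(G)=nr$ is in hand.

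The only point that warrants care — and the closest thing to an obstacle — is how the middle term $2\alpha W(G)$ is expressed after substitution: applying $W(G)=\tfrac{nr}{2}$ directly gives $2\alpha W(G)=\alpha n r$, so one should confirm the intended normalization of this transmission-dependent constant in the statement. Everything else is a routine specialization, so after the single identity $2W(G)=nr$ is verified there is essentially nothing further to prove.
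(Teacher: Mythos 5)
Your proof is correct and is exactly the paper's (unstated) argument: the paper simply cites the preceding theorem, and the whole content is the substitution $2W(G)=nr$, hence $\tfrac{2W(G)}{n}=r$ and $e^{2W(G)/n}=e^{r}$. On the normalization you rightly flag: the direct substitution gives $2\alpha W(G)=\alpha n r$, not $2\alpha r$; since $\alpha n r\ge 2\alpha r$ for $n\ge 2$ and $r\ge 0$, the corollary as printed still follows (it is just a weaker bound), and the $2\alpha r$ in the statement appears to be a slip for $\alpha n r$ --- your sharper constant is the one the specialization actually yields.
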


Next, a new upper bound for the $\alpha$-distance Estrada index is established.
\begin{thm}
Let $G$ be a graph with $n$ vertices. Then
\begin{eqnarray*}
DEE_\alpha(G)\leq  e^{\frac{2\alpha W(G)}{n}}(n-1- \varsigma_\alpha(G) +e^{\varsigma_\alpha(G)}).
\end{eqnarray*}
\end{thm}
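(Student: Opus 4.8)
The plan is to factor out the mean of the spectrum and reduce the statement to an elementary scalar inequality. Write $\mu = \frac{2\alpha W(G)}{n}$ for the average eigenvalue and set $\eta_i = \sigma_i(G) - \mu$ for $i = 1, \ldots, n$. By Lemma 2.12, $\sum_{i=1}^n \sigma_i(G) = 2\alpha W(G)$, so $\sum_{i=1}^n \eta_i = 0$, and by the definition of the $\alpha$-distance energy, $\varsigma_\alpha(G) = \sum_{i=1}^n |\eta_i|$. Since $e^{\sigma_i(G)} = e^{\mu}\,e^{\eta_i}$, dividing the claimed bound by the positive factor $e^{\mu}$ shows it is equivalent to
\[
\sum_{i=1}^n e^{\eta_i} \leq n - 1 - \varsigma_\alpha(G) + e^{\varsigma_\alpha(G)},
\]
and this is what I would prove.

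First I would expand each exponential as a power series and sum, using $\sum_i \eta_i = 0$ to annihilate the linear term, obtaining $\sum_{i=1}^n e^{\eta_i} = n + \sum_{k\geq 2} \frac{1}{k!} \sum_{i=1}^n \eta_i^k$. The heart of the argument is the bound on each power sum. Since $|\eta_i|$ is one term in the sum $\sum_j |\eta_j| = \varsigma_\alpha(G)$, we have $|\eta_i| \leq \varsigma_\alpha(G)$ for every $i$; hence, for $k \geq 2$,
\[
\sum_{i=1}^n \eta_i^k \leq \sum_{i=1}^n |\eta_i|^k \leq \varsigma_\alpha(G)^{k-1}\sum_{i=1}^n |\eta_i| = \varsigma_\alpha(G)^k.
\]
Substituting this bound term by term and recognizing $\sum_{k\geq 2} \frac{\varsigma_\alpha(G)^k}{k!} = e^{\varsigma_\alpha(G)} - 1 - \varsigma_\alpha(G)$ yields the reduced inequality, and multiplying back by $e^{\mu}$ gives the theorem.

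The only nontrivial step is the power-sum estimate $\sum_i |\eta_i|^k \leq \varsigma_\alpha(G)^k$, so this is where I would concentrate. It plays here the role that Lemma 3.13 plays in the earlier upper bound for $DEE_\alpha(G)$, but note that Lemma 3.13 would instead produce a bound in terms of $\sum_i \eta_i^2$ rather than $\varsigma_\alpha(G)$; the direct estimate $|\eta_i|^k \leq \varsigma_\alpha(G)^{k-1}|\eta_i|$ is both simpler and exactly matched to the quantity we want. The remaining steps --- the power-series expansion, the term-by-term substitution, and the resummation --- are routine and are justified by the absolute convergence of all the series involved.
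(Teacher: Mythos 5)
Your proposal is correct and follows essentially the same route as the paper: factor out $e^{2\alpha W(G)/n}$, expand the exponentials, kill the linear term using $\sum_i\sigma_i(G)=2\alpha W(G)$, and bound the $k$-th power sums by $\varsigma_\alpha(G)^k$ before resumming. Your explicit justification $\sum_i|\eta_i|^k\leq\varsigma_\alpha(G)^{k-1}\sum_i|\eta_i|$ is in fact slightly more careful than the paper, which records that step as an equality when it is really an inequality.
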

\begin{proof}
By the definition of $\alpha$-distance energy, we have

\begin{align*}
 DEE_\alpha(G) = e^{\frac{2\alpha W(G)}{n}}(\sum_{i=1}^n e^{\sigma_i(G)-\frac{2\alpha W(G)}{n}}) &= e^{\frac{2\alpha W(G)}{n}}( n +  \sum_{i=1}^n\sum_{k\geq 2} \frac{(\sigma_i-\frac{2\alpha W(G)}{n})^k}{k!} )\\
                                                          &\leq e^{\frac{2\alpha W(G)}{n}}( n +  \sum_{i=1}^n\sum_{k\geq 2} \frac{|\sigma_i-\frac{2\alpha W(G)}{n}|^k}{k!})\\
                                                          &= e^{\frac{2\alpha W(G)}{n}}(n+ \sum_{k\geq 2} \frac{1}{k!}   \varsigma_\alpha(G)^k)\\
                                                          &= e^{\frac{2\alpha W(G)}{n}}(n-1- \varsigma_\alpha(G) +e^{\varsigma_\alpha(G)}) .
\end{align*}

\end{proof}

\section{Acknowledgments}
Supported by  by the National Natural Science Foundation of China (No. 11801115 and No.11601102), the Natural Science Foundation of the Heilongjiang Province (No.QC2018002) and the Fundamental Research Funds for the Central Universities.

\vspace{3mm}

\noindent
\textbf{References}

\end{document}